\def\csname ver@subfig.sty\endcsname{}
\theoremstyle{plain}
\newtheorem{theorem}{Theorem}[section]
\newtheorem{cor}[theorem]{Corollary}
\newtheorem{lemma}[theorem]{Lemma}
\newtheorem{prop}[theorem]{Proposition}
\newtheorem{conjecture}[theorem]{Conjecture}
\theoremstyle{definition}
\newtheorem{definition}[theorem]{Definition}
\newtheorem{remark}[theorem]{Remark}
\newtheorem{example}[theorem]{Example}
\newtheorem{question}[theorem]{Question}
\newtheorem{notation}[theorem]{Notation}
\numberwithin{equation}{section}
\newcommand{\R}{\mathbb{R}}
\newcommand{\sS}{\mathcal{S}}
\newcommand{\sN}{\mathcal{N}}
\DeclareMathOperator{\conv}{conv}
\DeclareMathOperator{\vol}{vol}
\title{Exact Solutions in Log-Concave\\ Maximum Likelihood Estimation}
\author{Alexandros Grosdos, Alexander Heaton, Kaie Kubjas, Olga Kuznetsova,\\ Georgy Scholten, and Miruna-Stefana Sorea}
\begin{document}

\maketitle
\begin{abstract}
We study probability density functions that are log-concave. Despite the space of all such densities being infinite-dimensional, the maximum likelihood estimate is the exponential of a piecewise linear function determined by finitely many quantities, namely the function values, or heights, at the data points. We explore in what sense exact solutions to this problem are possible.
First, we show that the heights given by the maximum likelihood estimate are often transcendental. For a cell in one dimension, 
the maximum likelihood estimator is expressed in closed form using the generalized $W$-Lambert function. Even  more, we show that finding the log-concave maximum likelihood estimate is equivalent to solving a collection of polynomial-exponential systems of a special form. 
Even in the case of two equations, very little is known about solutions to these systems. 
As an alternative, we use Smale's $\alpha$-theory to refine approximate numerical solutions and to certify solutions to log-concave density estimation.
\end{abstract}

\tableofcontents

\section{Introduction}
Nonparametric methods in statistics emerged in the 1950-1960s~\cite{fix1951discriminatory,rosenblatt1956estimation, parzen1962estimation, ayer1955empirical} and fall into two main streams: smoothing methods and shape constraints. 
Examples of smoothing methods include delta sequence methods such as kernel, histogram and orthogonal series estimators~\cite{walter1979probability}, and penalized maximum likelihood estimators, e.g., spline methods~\cite{eggermont2001maximum}. Their defining feature is the need to choose the  smoothing or tuning parameters. It is a delicate process because smoothing parameters depend on the unknown probability density function. 
In contrast to smoothing methods, shape constrained nonparametric density estimation is fully automatic and does not depend on the underlying probability distribution, though this comes at the expense of worse $L^1$ convergence rates for smooth densities~\cite{eggermont2000maximum}. Some previously studied classes of functions include non-increasing~\cite{grenander1956theory}, convex~\cite{groeneboom2001estimation}, $k$-monotone~\cite{balabdaoui2007estimation} and $s$-concave~\cite{MR3485950}. We refer the reader to~\cite{silverman1998density,scott2015multivariate, tsybakov2009introduction, groeneboom2014nonparametric} for general references on nonparametric statistics. The definitions of $k$-monotone and $s$-concave can be found in~\cite{balabdaoui2005estimation} and~\cite{ dharmadhikari1988unimodality}, respectively.

In this paper we focus on the class of log-concave densities, which is an important special case of $s$-concave densities. The choice of log-concavity is attractive for several reasons. First of all, most common univariate parametric families are log-concave, including the normal, Gamma with shape parameter greater than one, Beta densities with parameters greater than~1, Weibull with parameter greater than 1 and others. Furthermore, log-concavity is used in reliability theory, economics and political science~\cite{bagnoli2005log}. In addition to this, log-concave densities have several desirable statistical properties. For example, log-concavity implies unimodality but log-concave density estimation avoids the spiking phenomenon common in general unimodal estimation~\cite{duembgen2009}. Moreover, this class is closed under convolutions and taking pointwise limits~\cite{cule2010theoretical}. We refer the reader to~\cite{MR3881205} for an overview of the recent progress in the field.

Let $X=(x_1,x_2,\ldots,x_n)$ be a point configuration in $\mathbb{R}^d$ with weights $w=(w_1,w_2,\ldots,w_n)$ such that $w_i \geq 0$ and $w_1+w_2+\cdots+w_n=1$. The log-concave \emph{maximum likelihood estimation (MLE)} problem  aims to find a Lebesgue density that solves
\begin{equation} \label{log_concave_MLE_constrained}
\max \sum_{i=1}^n w_i \log(f(x_i)) \text{ s.t. } \log(f) \text{ is concave and } \int_{\mathbb{R}^d} f(x) dx = 1.
\end{equation}
It has been shown that the solution exists with probability $1$ and is unique, and its logarithm is a tent function, i.e.,~a piecewise linear function with regions of linearity inducing a subdivision of the convex hull of $X$~\cite{walther2002detecting, pal2007estimating, cule2010maximum, robeva2019geometry}, see Figure~\ref{figure: 14 point} for an example. While MLE is the most widely studied estimator in this setting, it is not the only one, for  examples see~\cite{pmlr-v65-diakonikolas17a, MR3899822}.

The maximum likelihood estimator is attractive because of its consistency under general assumptions~\cite{pal2007estimating,duembgen2009,cule2010theoretical, dumbgen2011approximation} and  superior performance compared to kernel-based methods with respect to mean integrated squared error, as observed in simulations~\cite{cule2010maximum}. At the same time, the convergence rate is still an open question and only lower~\cite{kim2016global, kur2019optimality} and upper~\cite{kim2016global, pmlr-v75-carpenter18a} bounds are known. Further theoretical properties have been studied for some special cases of log-concave densities, e.g.,~$k$-affine densities~\cite{kim2018adaptation} and totally positive densities~\cite{robeva2018maximum}. Several algorithms have been developed to compute the log-concave MLE in one dimension~\cite{rufibach2007computing} and in higher dimensions~\cite{cule2010maximum,axelrod2019efficient,rathke2018fmlogconc}. Software implementations include  \texttt{R} packages such as \texttt{logcondens}~\cite{logcondens} and \texttt{cnmlcd}~\cite{cnmlcd} in one dimension, and \texttt{LogConcDEAD}~\cite{LogConcDEAD} and \texttt{fmlogcondens}~\cite{rathke2018github} in higher dimensions. 

\begin{example}
\label{motivatingexample}
The starting point of this paper is the following problem. Consider the sample of $14$ points in $\mathbb{R}^2$ with uniform weights:
\begin{align*}
X=\left((0,1),(0,9),(1,4),(2,4),(2,6),(3,3),(5,5),(6,3),(6,9),(7,6),(7,8),(8,9),(9,5),(9,9)\right).
\end{align*}
How many cells does the subdivision induced by the logarithm of the optimal log-concave density  have?

Using the \texttt{R} package \texttt{LogConcDEAD} with default parameters, one obtains that the logarithm of the maximum likelihood estimate is a piecewise linear function with seven unique linear pieces. 
However, when one investigates the optimal density more closely, it appears that several linear pieces are similar. For example, a visual inspection of the optimal density depicted in Figure \ref{figure: 14 point} 
makes it impossible to distinguish all $7$ regions and
suggests that there are only four unique linear pieces.  Using \texttt{LogConcDEAD} one also obtains the two triangles, but according to the \texttt{LogConcDEAD} output the quadrangle consists of two linear pieces and the hexagon consists of three linear pieces. The subdivision corresponding to the \texttt{LogConcDEAD} result is depicted in Figure \ref{fig:7-cell}.  What is the true number of unique linear pieces of the optimal density? Is it four, seven or another value?

Theoretically, the algorithm used in \texttt{LogConcDEAD} finds the true optimal density, however, in practice, the answer is a numerical approximation. By changing the parameter \texttt{sigmatol} from default value $10^{-8}$ to $10^{-10}$, \texttt{LogConcDEAD} outputs four unique linear pieces, exactly as we observed in Figure \ref{figure: 14 point}. Although it might seem obvious that four is the correct number of linear pieces, in reality the situation is more complicated, see Example~\ref{finalexample}. How do we find the correct number of linear pieces?

\begin{figure}
\centering
\includegraphics[width=0.55\textwidth]{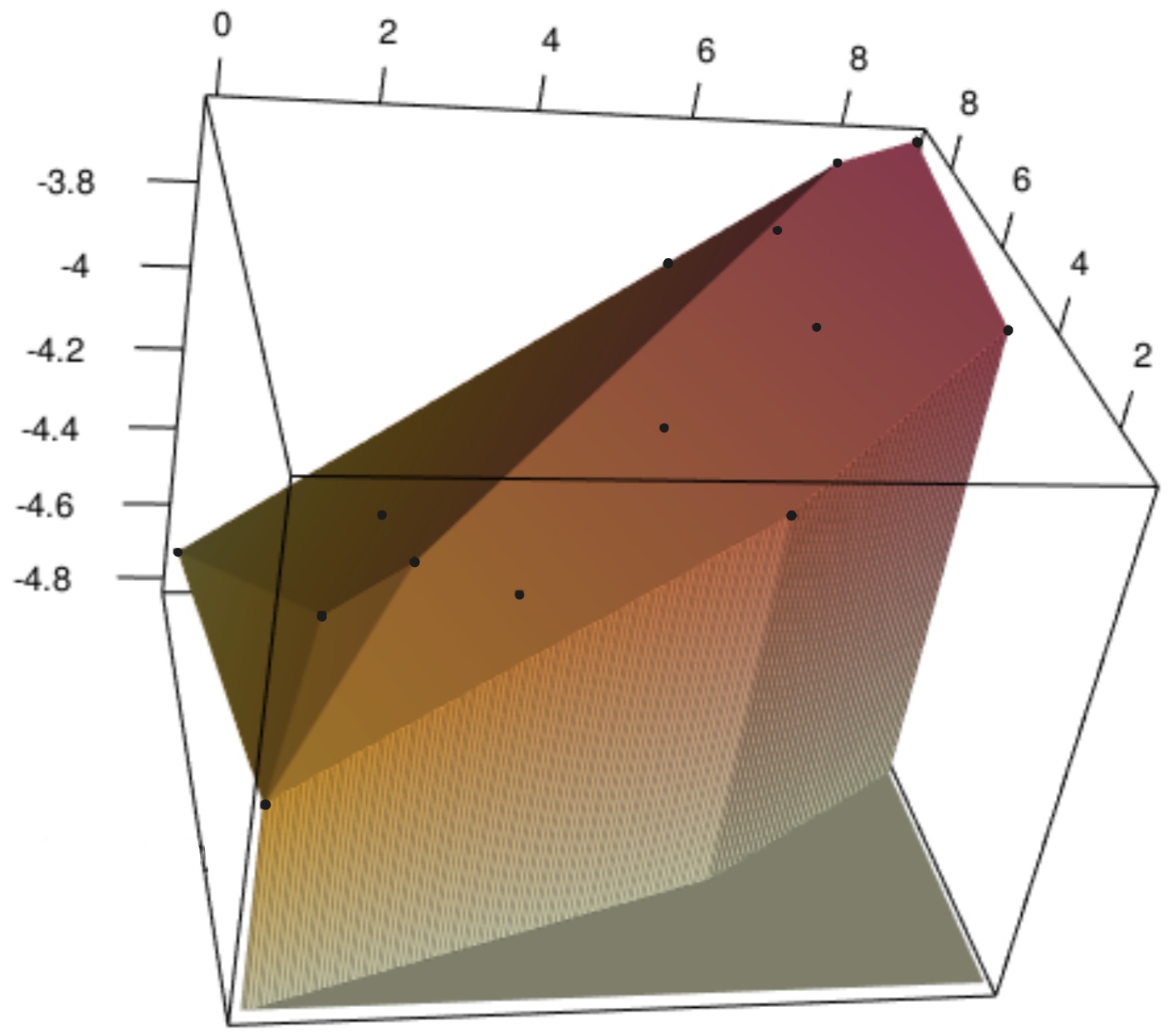}
\caption{The optimal tent function for the sample of 14 points in Example~\ref{motivatingexample}.}
\label{figure: 14 point}
\end{figure}
\end{example}

The goal of this paper is to study exact solutions to log-concave maximum likelihood estimation. An exact solution will have three different meanings in this paper. First, one might hope that it is an algebraic number. This would enable exact symbolic computations by way of storing a floating point approximation of a number along with a polynomial that vanishes on it. Such computations are not possible for transcendental numbers. Thus, the first main result of our paper is Theorem \ref{th:transcend}, which states that the heights at the sample points of the logarithm of the log-concave density estimate are transcendental for an open ball of weights. 

Second, in light of Theorem \ref{th:transcend}, we would like to express the maximum likelihood estimator in closed form using well-known mathematical operations and functions, although not necessarily elementary functions. 
In the simplest case of one cell in one dimension, we derive the log-concave density estimator in closed form using the generalized  $W$-Lambert function, see Proposition~\ref{lemma:one_dim_one_cell}. It is known that the generalized  $W$-Lambert function is not an elementary function.  More generally, solving the MLE can be restated as a collection of polynomial-exponential systems of equations, which have been studied in the literature. However, even in the case of two equations, only bounds on solutions are known~\cite{Maignan:1998}. This suggests that it might be difficult to express the log-concave maximum likelihood estimator in closed form. As an alternative,
we turn to Smale's $\alpha$-theory, which we describe briefly now.

Third, given a sufficiently close floating point solution to the MLE problem, one hopes that it can be refined to any desired precision using Newton iteration or other techniques. A natural question arises: when is the approximate solution good enough for these methods to succeed? A way to make this mathematically rigorous is Smale's $\alpha$-theory \cite{BCSS1998, S1986}, which we discuss in Section \ref{section:Smale-alpha-certify}. We obtain the $\alpha$-certified solutions to log-concave density estimation. This allows us to test and compare numerical solvers, as well as rigorously decide the certified, correct subdivision for a given log-concave density estimation problem. Our methods are especially relevant when the precision of the log-concave density estimate is important. This opens new pathways to answering the motivating question: what is the correct number of cells?

The code for computations in this paper can be found at~\cite{githubrepo}.

\section{Geometry of log-concave maximum likelihood estimation}\label{sect:LogConcMLE}

We start by reviewing the geometry of log-concave maximum likelihood estimation mostly following~\cite{robeva2019geometry}. 

\begin{definition}
Let $P$ be the convex hull of a point configuration $X=(x_1,x_2,\ldots,x_n)\subset \mathbb{R}^d$. For a fixed real vector $y \in \mathbb{R}^n$,  
we define a function $h_{X,y}$ on $\mathbb{R}^d$, called the \emph{tent function}, as the smallest concave function such that $h_{X,y}(x_i) \geq y_i$ for $i=1,\ldots,n$. Here the term smallest means that for any other concave function $\bar{h}$ on $\R^d$ such that $\bar{h}(x_i) \geq y_i$ for $i=1,\ldots,n$, one must have $\bar{h}(x) \geq h_{X,y}(x)$ for all $x \in \R^d$. The tent function $h_{X,y}$ is piecewise linear on $P$ with linear pieces equal to upper facets of the convex hull of the points $(x_1,y_1),(x_2,y_2),\ldots,(x_n,y_n)$ in $\mathbb{R}^{d+1}$. 
We have $h_{X,y}(x) = -\infty$ at all points $x \in \mathbb{R}^d$ outside~$P$. If $h_{X,y}(x_i) = y_i$ for $i=1,\ldots,n$, then $y$ is called \emph{relevant}.
\end{definition}

It was shown by Cule, Samworth and Stewart for uniform weights~\cite{cule2010maximum} and by Robeva, Sturmfels and Uhler in general~\cite{robeva2019geometry} that the constrained optimization problem~(\ref{log_concave_MLE_constrained}) of finding the log-concave maximum likelihood estimate is equivalent to the unconstrained optimization problem
\begin{equation} \label{log_concave_MLE_unconstrained}
\max_{y \in \mathbb{R}^n} \,\,\,\, w \cdot y - \int_P \exp \big( h_{X,y}(t)\big)dt.
\end{equation}
Moreover, the log-concave maximum likelihood estimate is a tent
function with tent poles at some of the $x_i$. Therefore finding the
log-concave density which maximizes the likelihood of  $(X,w)$ is equivalent to finding an optimal height vector $y^*$. 

\begin{definition} \label{def: reg triangulation}
We follow the definitions in \cite{MR2743368}. Given a point configuration $X$ in $\R^d$, a \emph{subdivision} $\Delta$ of $X$ is a collection of $d$-polytopes, denoted $\sigma_i$, such that the union of polytopes in $\Delta$ equals $\conv(X)$, the vertex set of polytopes in $\Delta$ is contained in $X$ and the intersection of polytopes in $\Delta$ can only happen along lower dimensional faces. A subdivision $\Delta$ is called a \emph{triangulation}, if all polytopes in $\Delta$ are simplices.  A triangulation $\Delta$ of the point configuration $X$ is called \emph{maximal}, if every element of $X$ is a vertex of a simplex in $\Delta$.  A subdivision is called \emph{regular} if its full dimensional cells $\sigma_i$ are combinatorially equivalent to the regions of linearity of a tent function on $X$ for some height vector $y\in \R^n.$
\end{definition}

\begin{cor}\cite[Corollary 2.6]{robeva2019geometry}\label{cor:optsecondary}
To find the optimal height vector $y^*$ in~(\ref{log_concave_MLE_unconstrained}) is to maximize the following rational-exponential objective function over $y \in \mathbb{R}^n$:
\begin{equation}
\label{eq:optsecondary}
S(y_1,\dots,y_n) = \,\,\,w \cdot y \,- \,\sum_{\sigma \in \Delta}
\sum_{i \in \sigma}
\frac{{\rm vol}(\sigma) \cdot {\rm exp}(y_i)} 
{\prod_{\alpha \in \sigma \backslash i} (y_i-y_{\alpha})},
\end{equation}
where $\Delta$ is any regular triangulation that refines the regular subdivision induced by the tent function $h_{X,y}$.
\end{cor}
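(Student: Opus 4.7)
The plan is to reduce the unconstrained problem (\ref{log_concave_MLE_unconstrained}) to the closed-form objective $S_\Delta$ by computing $\int_P \exp(h_{X,y}(t))\,dt$ explicitly, simplex by simplex. Because $\Delta$ refines the subdivision of $P$ induced by $h_{X,y}$, the restriction of $h_{X,y}$ to each $\sigma \in \Delta$ is the unique affine function $\ell_\sigma$ interpolating the heights $y_i$ at the vertices $x_i$, $i \in \sigma$. Since the simplices of $\Delta$ cover $P$ and meet only along lower-dimensional faces, the integral splits as
\begin{equation*}
\int_P \exp(h_{X,y}(t))\,dt \;=\; \sum_{\sigma \in \Delta} \int_\sigma \exp(\ell_\sigma(t))\,dt.
\end{equation*}

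The next step is to evaluate each summand in closed form. Via an affine change of variables from the standard $d$-simplex $\Delta^d = \{t \in \R^d_{\geq 0} : \sum_k t_k \leq 1\}$ onto $\sigma$, with Jacobian equal to $\vol(\sigma)$ in the normalized-volume convention, the problem reduces to computing $\int_{\Delta^d} \exp(\ell(s))\,ds$ for an affine function $\ell$ with vertex values $y_i$, $i \in \sigma$. Whenever these values are pairwise distinct, induction on $d$ (or iterated one-dimensional integration, which at each step produces a divided-difference ratio of the form $(\exp(a)-\exp(b))/(a-b)$) yields the Lagrange-type identity
\begin{equation*}
\int_{\Delta^d}\exp(\ell(s))\,ds \;=\; \sum_{i \in \sigma} \frac{\exp(y_i)}{\prod_{\alpha \in \sigma \setminus i}(y_i - y_\alpha)},
\end{equation*}
equivalently recognizing the right-hand side as the $d$-fold divided difference of $\exp$ at the nodes $\{y_i : i \in \sigma\}$. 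Multiplying by the Jacobian $\vol(\sigma)$ and summing over $\sigma \in \Delta$ reproduces exactly the double sum in (\ref{eq:optsecondary}), so (\ref{log_concave_MLE_unconstrained}) reads $\max_y S_\Delta(y)$.

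The main obstacle is the locus on which $y_i = y_\alpha$ for two vertices $i,\alpha$ of a common simplex: the denominators of $S_\Delta$ vanish there while the integral itself remains analytic in $y$. I would handle this by identifying the Lagrange sum as a divided difference of $\exp$, whose confluent (Hermite-type) extension is continuous across such coincidences and agrees with the integral there; consequently $S_\Delta$ extends continuously to all of $\R^n$, and the equivalence of maximizers passes from an open dense regular set to its closure. Finally, the freedom to choose any refining regular triangulation $\Delta$ in the statement is justified by the standard fact from the theory of secondary polytopes underlying Definition~\ref{def: reg triangulation}, namely that every regular subdivision admits a refinement to a regular triangulation, obtained by a generic perturbation of the height vector.
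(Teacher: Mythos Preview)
The paper does not supply its own proof of this corollary; it is quoted directly from Robeva, Sturmfels and Uhler, as the citation \cite[Corollary 2.6]{robeva2019geometry} indicates. Your derivation---splitting $\int_P e^{h_{X,y}}$ over the simplices of a refining triangulation, pulling back to the standard simplex, and evaluating $\int_{\Delta^d} e^{\ell}$ as the $d$th divided difference of $\exp$ at the vertex heights---is correct and is precisely the computation that underlies the cited result. Your treatment of the coincidence locus $y_i=y_\alpha$ via confluent divided differences, and of the volume normalization, is also the right way to make the formula well-defined there.
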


If $y$ induces a regular subdivision $\Delta$ that is not a maximal regular triangulation, then we can consider any maximal regular triangulation that refines $\Delta$. Thus if there are $m$ maximal regular triangulations of $X$, then to find the optimal $y^*$ we must compare the optimal values $y^*_{\Delta_1}, y^*_{\Delta_2}, \dots, y^*_{\Delta_m}$ which are obtained by solving the optimization problem (\ref{eq:optsecondary}) $m$ times, once for each maximal regular triangulation $\Delta_1, \Delta_2, \dots, \Delta_m$.

\begin{notation}\label{notation: S restriction}
We will denote by $S_{\Delta}$ the function given by the right hand side of~(\ref{eq:optsecondary}) for a fixed triangulation~$\Delta$.  
\end{notation}

\begin{example} \label{ex:2-5-7}
Fix $d=1$, $n=3$ and $X=(2,5,7)$. 
The configuration $X$ has two triangulations 
$\Delta_1 = \{ \{1,3\}\}$ and 
$\Delta_2 = \{\{ 1,2\}, \{2,3\} \}$, which are both regular triangulations. Only $\Delta_2$ is a maximal triangulation. Hence solving the optimization problem~(\ref{log_concave_MLE_unconstrained}) is equivalent to maximizing the objective function
\begin{equation}
\label{eq 2-cells}
     S_{\Delta_2}=w \cdot y - 3 \frac{e^{y_1} - e^{y_2}}{y_1-y_2} -2 \frac{e^{y_2} - e^{y_3}}{y_2-y_3}.
\end{equation}
If $y_1=y_2$ or $y_2=y_3$, then a denominator on the right hand side of~(\ref{eq 2-cells}) becomes zero. However, the objective function in the formulation~(\ref{log_concave_MLE_unconstrained}) can be still simplified to
\begin{equation*}
w \cdot y -3e^{y_2}-2 \frac{e^{y_2} - e^{y_3}}{y_2-y_3}\quad \text{or} \quad w \cdot y - 3 \frac{ (e^{y_1} - e^{y_2})}{y_1 - y_2}-2e^{y_2}.
\end{equation*}

To visualize the situation, we consider the Samworth body 
\[\sS(X)=\left\{y \in \R^3:\int_{P}\exp(h_{X,y}(t))dt\leq 1\right\},\]
which was introduced in~\cite{robeva2019geometry}.
The unconstrained optimization problem~(\ref{log_concave_MLE_unconstrained}) is equivalent to the constrained optimization problem of maximizing the linear function $w \cdot y$ over the Samworth body.  
For different choices of weight vector $w=(w_1,w_2,w_3)$, we obtain different optimal height vectors $y=(y_1,y_2,y_3)$ on the surface of the Samworth body, and the height vector determines the triangulation. The Samworth body consists of two regions that can be seen in Figure~\ref{fig:Samworth-body-for-three-points}. The green region comes from the one-simplex triangulation ${\Delta_1=\{ \{1,3\}\}}$, while the red region comes from the two-simplex triangulation ${\Delta_2 = \{\{ 1,2\}, \{2,3\} \}}$. Moreover, one can see lines separating the green region into two pieces and the red region into three pieces (ignore the curve separating the green and the red regions for now). These lines correspond to the degenerate cases where $y_1 = y_3$, $y_1 = y_2$ or $y_2 = y_3$, and hence the right hand side of~(\ref{eq:optsecondary}) is not defined. Therefore those lines are simply artifacts of the reformulation~(\ref{eq:optsecondary}) since in the original unconstrained setting~(\ref{log_concave_MLE_unconstrained}) these points present no difficulty.  The intersection of the three lines is the point $(-\log 5,-\log 5,-\log 5)$.

\begin{figure}
    \centering
    \includegraphics[width = 0.7\textwidth]{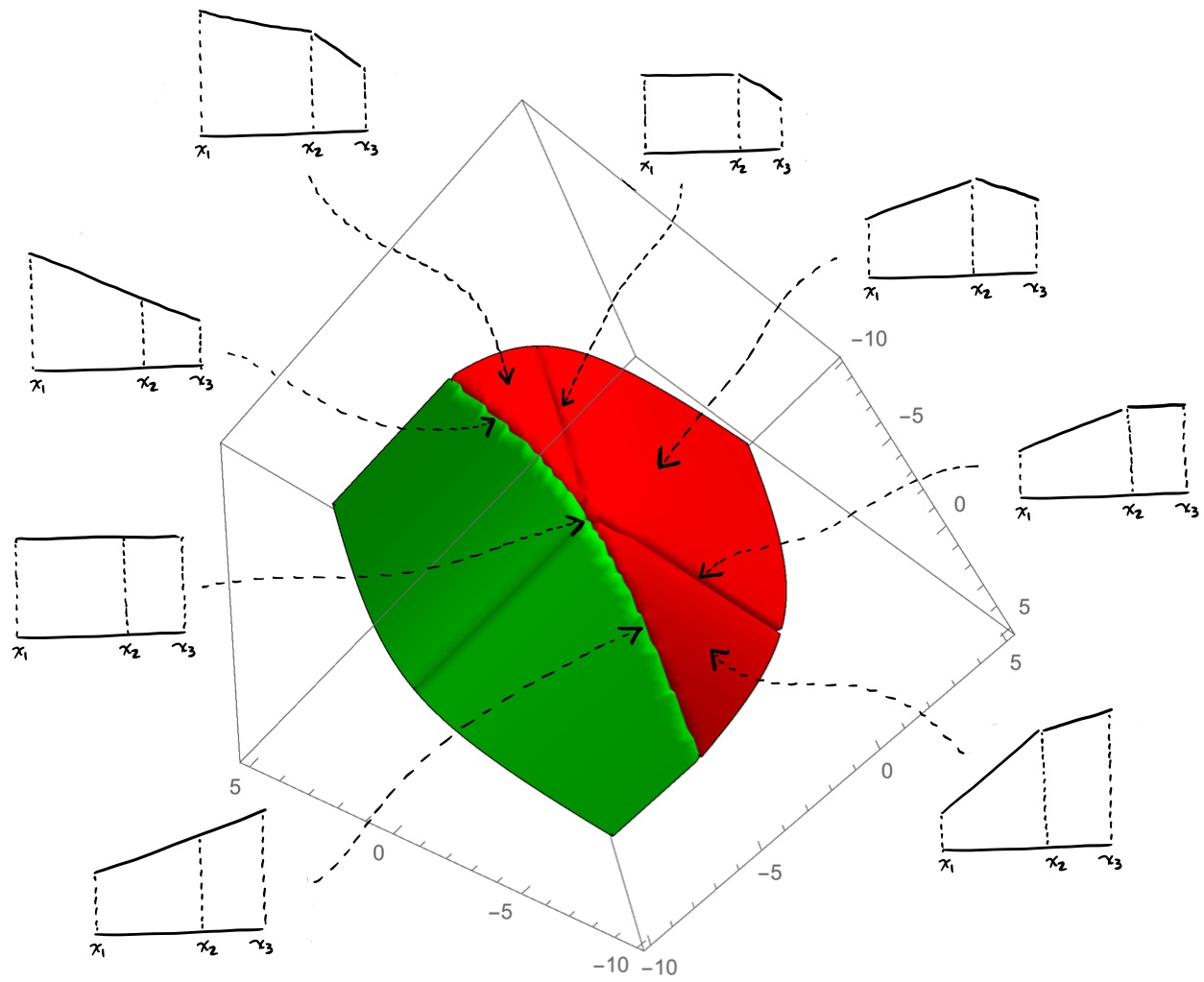}
    \caption{The Samworth body for $X=(2,5,7)$.}
    \label{fig:Samworth-body-for-three-points}
\end{figure}

Consider the curve separating the green and red regions of the Samworth body. 
This curve is made of all the  points $y$ that form a relevant
  tent function, inducing the subdivision $\Delta_1$.  To understand the green region, see the piecewise linear functions drawn in Figure \ref{fig:varying-y2}. Since the lowest (dotted) function is not concave, it is invalid as a tent function.  Therefore, if the height $y_2$ is too low, the optimal tent function will be the (solid-line) linear function. 
In effect, the optimal tent-function ignores heights $y_i$ if they are too low. This basic phenomenon is responsible for the green part of the Samworth body being flat in the $y_2$ direction, meaning that it is a pencil of half-lines parallel to the $y_2$-axis.

\begin{figure}
    \centering
    \includegraphics[width=0.95\textwidth]{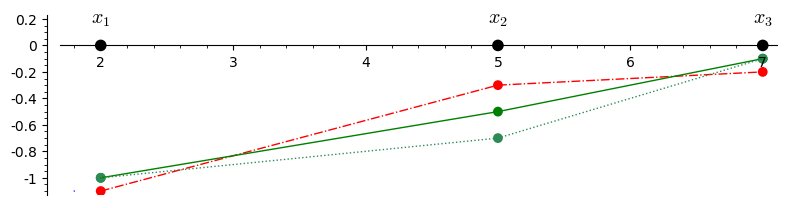}
    \caption{The red tent function corresponds to a vector $y$ in the red region of the Samworth body. The solid green tent function corresponds to a vector $y$ on the curve separating red and green regions of the Samworth body. The dotted green function is not convex. Its height vector $y$ belongs to the green region of the Samworth body and both green sets of heights give the same tent function.}
    \label{fig:varying-y2}
\end{figure}

The transition from the red region to the green region is not smooth. For every $y$ on the curve between the green and red regions, there is a two-dimensional cone of weight vectors that give $y$ as an optimal solution. The generators of this cone are described in~\cite[Theorem~3.7]{robeva2019geometry}. The optimal height vector $y^*$ for $w = (\frac{1}{3},\frac{1}{3},\frac{1}{3})$ lies on the curve between the red and green regions. It is not a critical point of the function~(\ref{eq 2-cells}), because $w$ is not a normal vector to the red region at the point $y^*$.
\end{example}

We now return to the general situation and consider the specific approach of critical equations for solving the optimization problem~(\ref{eq:optsecondary}). Let $X = (x_1,\dots,x_n)$ be a configuration of $n$ points $x_i \in \mathbb{R}^d$. Fixing a maximal regular triangulation $\Delta$ of our point configuration $X$, we can find the optimal $y^*_{\Delta}$  for $S_{\Delta}$ in~(\ref{eq:optsecondary}) over $y \in \mathbb{R}^n$ by solving the system of critical equations $\partial S_{\Delta}/\partial y_i = 0$. These partial derivatives take the form (see \cite[Proof of Lemma 3.4]{robeva2019geometry}):
\begin{align}
\frac{\partial S_{\Delta}}{\partial y_i} \,\,\,= \,\,\,w_i \,\, - \,
&\sum_{\substack{\sigma\in\Delta,\\ i\in\sigma}}\vol(\sigma)\exp(y_i)\frac1{\prod_{\alpha\in \sigma\setminus i }(y_i-y_{\alpha})}\left(1 - \sum_{\alpha\in \sigma\setminus i }\frac{1}{(y_i - y_{\alpha})}\right)\nonumber \\
-&\, \sum_{\substack{\sigma\in\Delta \\ i\in\sigma}}\text{vol}(\sigma)\sum_{j\in \sigma\setminus i } \exp(y_j) \frac{1}{\prod_{\alpha\in \sigma\setminus j} (y_j - y_{\alpha})} \frac{1}{(y_j - y_i)}.
\label{critical-equations}
\end{align}
\begin{definition}\label{def: score equation matrix}
For a fixed maximal regular triangulation $\Delta$ of $X$, let $A$ be the matrix such that the system of $n$ critical equations (\ref{critical-equations}) can be written in the form
\begin{equation}\label{equation:score_equations_matrix_form}
  A e^{y} = w,
\end{equation}
where $e^{y}$ is a column vector of exponentials $(e^{y_1}, e^{y_2}, \dots, e^{y_n})^T$,  and $w$ is a column vector of weights $(w_1,\dots,w_n)^T$.  The matrix $A$ is called the \emph{score equation matrix}. 
\end{definition}
The entries of $A$ are in the field of rational functions in the variables $y_1,\dots,y_n$. Diagonal entries of $A$ are
$$
A_{j,j} = \sum_{\substack{\sigma\in\Delta,\\ j\in\sigma}} \vol (\sigma) \frac{1}{\prod_{\alpha \in \sigma \backslash j} (y_j-y_{\alpha})} \left( 1 - \sum_{\alpha \in \sigma \backslash j} \frac{1}{(y_j - y_{\alpha})} \right)
$$
and off-diagonal entries of $A$ are
$$
A_{i,j} = \sum_{\substack{\sigma\in\Delta,\\ i,j\in\sigma}} \vol(\sigma) \frac{1}{\prod_{\alpha \in \sigma \backslash j} (y_j - y_{\alpha})} \frac{1}{(y_j - y_i)}.
$$
The matrix $A$ can be written as a sum of matrices over maximal simplices $\sigma \in \Delta$. This will be described explicitly in the proof of Theorem~\ref{theorem: invertibility of A}. 

There are two caveats when solving the optimization problem~(\ref{eq:optsecondary}) using the method of critical equations. First, it is not enough to consider the system of critical equations $\partial S_{\Delta}/\partial y_i = 0$ only for each of the maximal regular triangulations $\Delta$, since the optimization problem~(\ref{eq:optsecondary}) is not smooth. One has to consider a system of critical equations for each subdivision of $X$. For a general subdivision~$\Delta$ of $X$, this system is constructed in the following way. We consider $S_{\Delta'}(y_1,\dots,y_n)$ for any maximal triangulation $\Delta'$ that refines $\Delta$, substitute $y_i$ that can be expressed in terms of other $y$'s in the subdivision $\Delta$ and construct the system of critical equations $\partial \widetilde{S}_{\Delta}/\partial y_i = 0$ for the resulting function $\widetilde{S}_{\Delta}$. For maximal triangulations, we have $\widetilde{S}_{\Delta}=S_{\Delta}$ and the system of critical equations is given by~(\ref{critical-equations}). We will demonstrate this phenomenon on the point configuration from Example~\ref{ex:2-5-7}. 

\begin{example} \label{ex:2-5-7-critical-equations}
Recall that $d=1$, $n=3$ and $X=(2,5,7)$. 
The configuration $X$ has two triangulations 
$\Delta_1 = \{ \{1,3\}\}$ and 
$\Delta_2 = \{\{ 1,2\}, \{2,3\} \}$. Let $w = (\frac{1}{3},\frac{1}{3},\frac{1}{3})$. The output from \texttt{LogConcDEAD} suggests that the optimal tent function is supported on one cell, with heights given by $y^*_1=-1.816665$, $y^*_2=-1.576024$ and ${y^*_3=-1.415597}$. However, the vector $y^*$ is neither a critical point of $S_{\Delta_2}$ nor of the function
$$
S_{\Delta_1}=w \cdot y - 5 \frac{e^{y_1} - e^{y_3}}{y_1-y_3}.
$$

This can be seen by taking partial derivatives of these functions with respect to $y_1,y_2,y_3$ and substituting $y^*_1,y^*_2,y^*_3$. In the case of $\partial S_{\Delta_1}/\partial y_i = 0$, it is particularly easy to see that there are no solutions, since $\partial S_{\Delta_1}/\partial y_2 = w_2 \neq 0$. In the case of $\partial S_{\Delta_2}/\partial y_i = 0$, the system of critical equations fails to certify in the sense of Section~\ref{section:Smale-alpha-certify}.

The points $(x_1,y^*_1),(x_2,y^*_2),(x_3,y^*_3)$ being collinear is equivalent to
$(x_2,y^*_2) = \lambda_1 (x_1,y^*_1) + \lambda_3 (x_3,y^*_3)$
where $\lambda_1,\lambda_3 \geq 0$, $\lambda_1+\lambda_3 = 1$. Since $x_1 = 2, x_2 = 5, x_3 = 7$, we have $\lambda_1 = \frac{2}{5}, \lambda_3 = \frac{3}{5}$. Hence $y_2 = \frac{2}{5}y_1 + \frac{3}{5} y_3.$ Substituting this expression into the objective function~(\ref{eq 2-cells}) we get 
\begin{equation*}
    \widetilde{S}_{\Delta_2} = \left( w_1 + \frac{2}{5} w_2\right)y_1 + \left( w_3 + \frac{3}{5} w_2 \right)y_3 - 5 \frac{e^{y_1} - e^{y_3}}{y_1-y_3}
\end{equation*}
which for uniform weights $w = (\frac{1}{3},\frac{1}{3},\frac{1}{3})$ becomes
\begin{equation}\label{equation:restriction-to-CtildeDelta}
    \widetilde{S}_{\Delta_2} = \frac{7}{15} y_1 + \frac{8}{15} y_3 - 5 \frac{e^{y_1} - e^{y_3}}{y_1-y_3}.
\end{equation}
We will verify in Example~\ref{exampleagainst} that $y^*$ is a critical point of the function $\widetilde{S}_{\Delta_2}.$
\end{example}

The second caveat is that to find the optimal tent function, it is not enough to merely compare the optimal critical points $y^*_{\Delta}$ of $\partial S_{\Delta}/\partial y_i = 0$ for each subdivision $\Delta$. Denote by $Y_{\Delta}$ the set of $y$ that induce a subdivision that is equal to or coarser than $\Delta$. For each $\Delta$, it also has to be checked that $y^*_{\Delta}$ is in $Y_{\Delta}$. Thus if $y^*_{\Delta}$ is not in $Y_{\Delta}$, then $y^*_{\Delta}$ should be discarded. If the maximum of $S_{\Delta}$ over $Y_{\Delta}$ is not a critical point of $S_{\Delta}$, then the maximum must be on the boundary of $Y_{\Delta}$, see Figure \ref{fig:restricting-to-CDelta} for an illustration. The boundary of $Y_\Delta$ is stratified into regions $Y_{\widetilde{\Delta}}$ corresponding to the various subdivisions $\widetilde{\Delta}$ which are refined by  $\Delta$.  Hence one should consider critical points for strictly coarser subdivisions $\widetilde{\Delta}$. 

\begin{figure}
    \centering
    \includegraphics[width=0.5\textwidth]{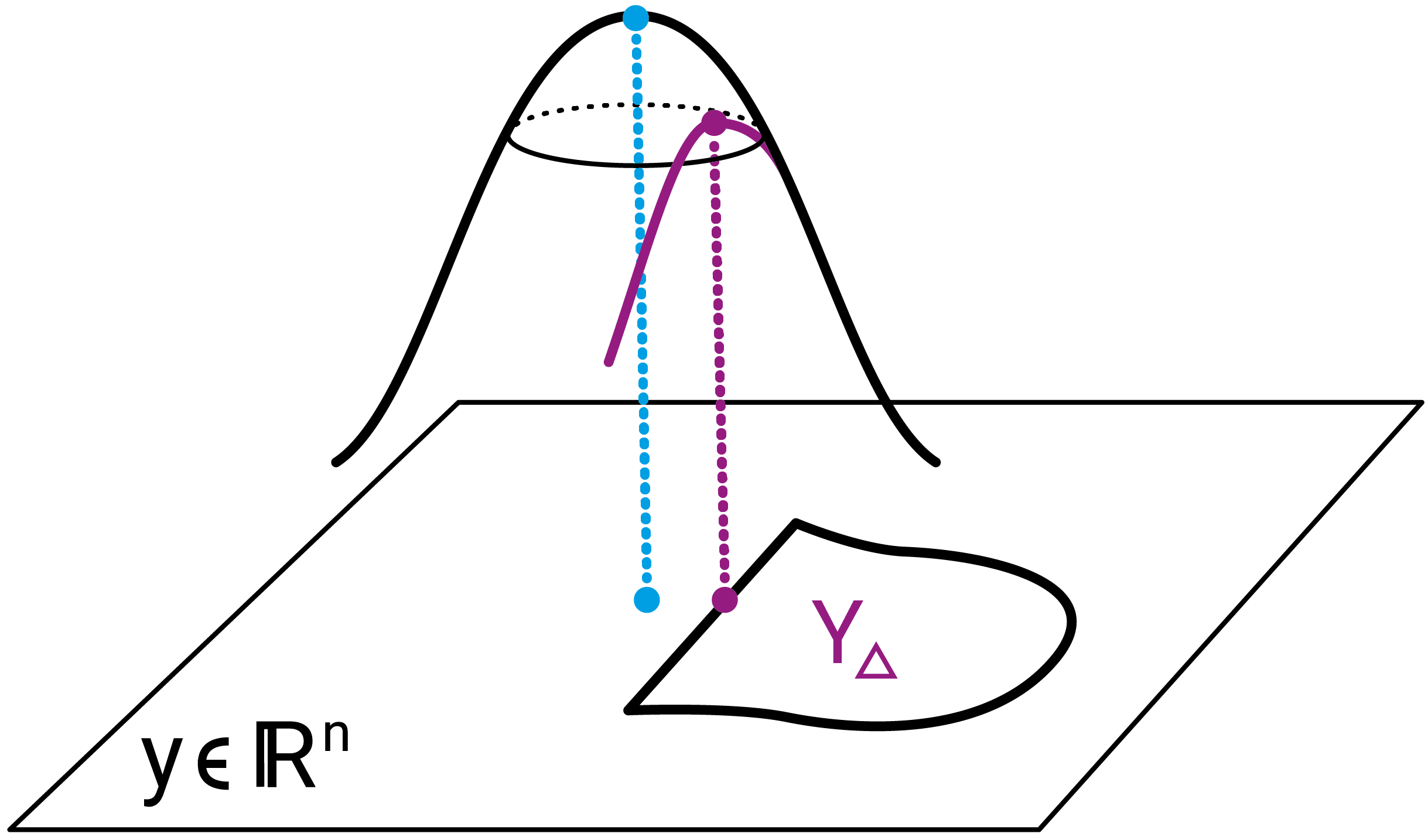}
    \caption{Maximizing $S_{\Delta}$ over $y$ restricted to $Y_\Delta$.}
    \label{fig:restricting-to-CDelta}
\end{figure}

\begin{example} \label{example:optimal-solutions-for-all-subdivisions}
We consider the point configuration $X=\{0,1,2,3,4\} \subseteq \mathbb{R}$ and the weight vector $w=(3/15,4/15,5/15,2/15,1/15)$. This point configuration has exactly eight subdivisions. For each subdivision $\Delta$, we use the  \texttt{Mathematica} commmand \texttt{NMaximize} to find the maximum $y^*_{\Delta}$ of the function $S_{\Delta}$. For each subdivision $\Delta$, the smallest piecewise-linear function $f^*_{\Delta}$ such that $f^*_{\Delta}(x_i)\geq y^*_{\Delta,i}$ for $i=1,\ldots,5$ is depicted in Figure~\ref{fig:five-points-optimal-y}. We have $\int_P \exp(f^*_{\Delta}(t))dt=1$ for all subdivisions $\Delta$. This implies that if $y^*_{\Delta}$ is not relevant, then $\exp(h_{X,y^*_{\Delta}})$ is not a distribution.

\begin{figure}
\centering
\begin{subfigure}{0.22\textwidth}
    \includegraphics[width=\textwidth]{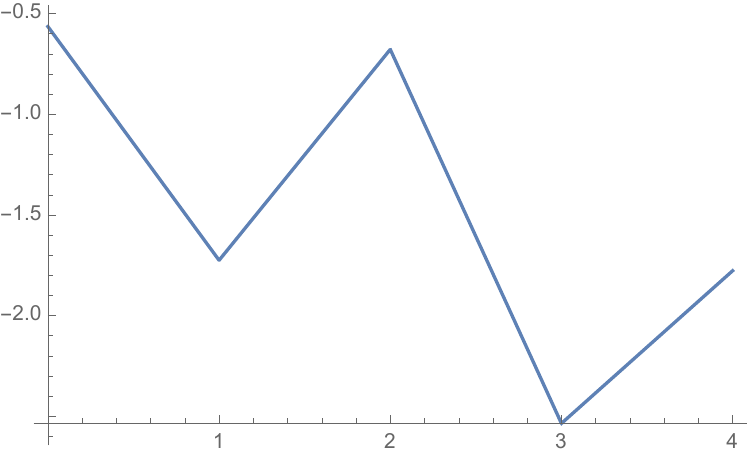}
    \caption{$\{12,23,34,45\}$}\label{fig:subdivision1}
\end{subfigure}
\hfill
\begin{subfigure}{0.22\textwidth}
    \includegraphics[width=\textwidth]{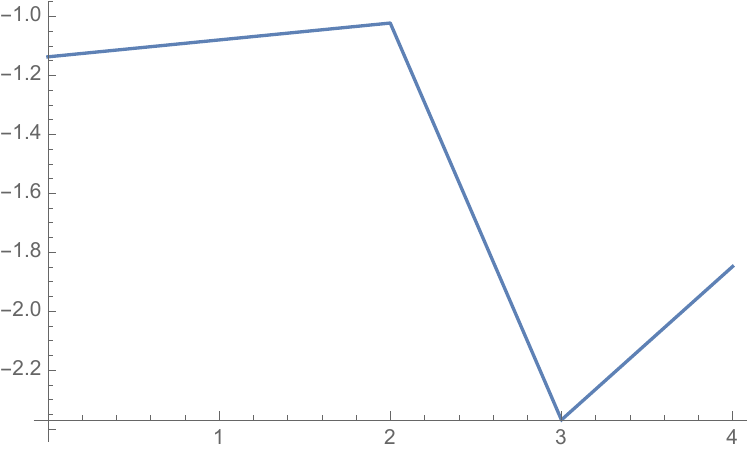}
    \caption{$\{13,34,45\}$}\label{fig:subdivision2}
\end{subfigure}
\hfill
\begin{subfigure}{0.22\textwidth}
    \includegraphics[width=\textwidth]{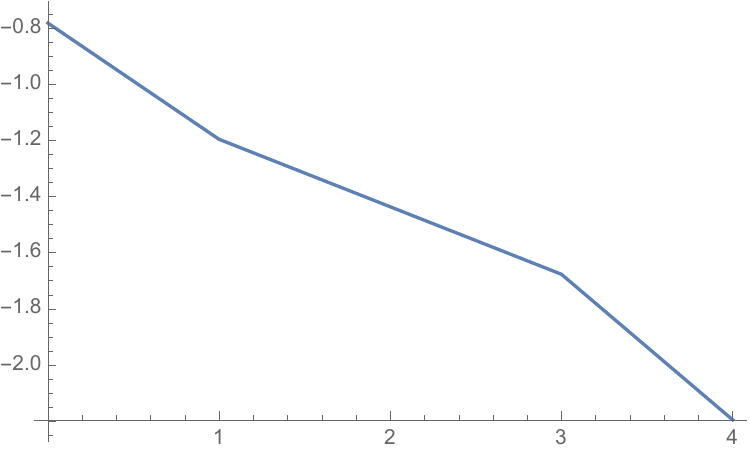}
    \caption{$\{12,24,45\}$}\label{fig:subdivision3}
\end{subfigure}
\hfill
\begin{subfigure}{0.22\textwidth}
    \includegraphics[width=\textwidth]{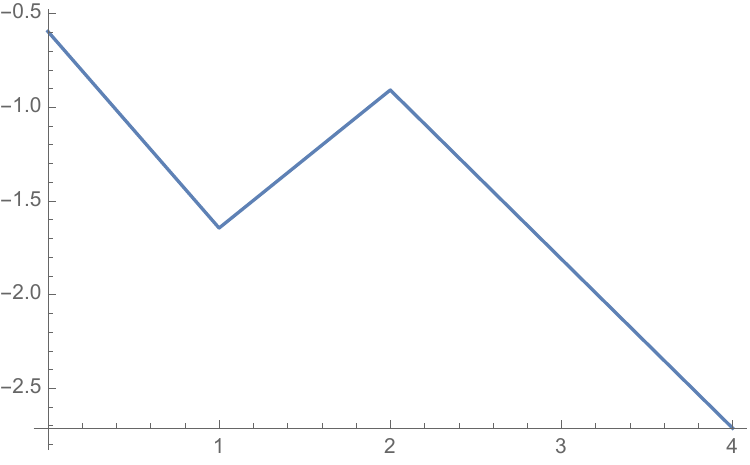}
    \caption{$\{12,23,35\}$}\label{fig:subdivision4}
\end{subfigure}

\begin{subfigure}{0.22\textwidth}
    \includegraphics[width=\textwidth]{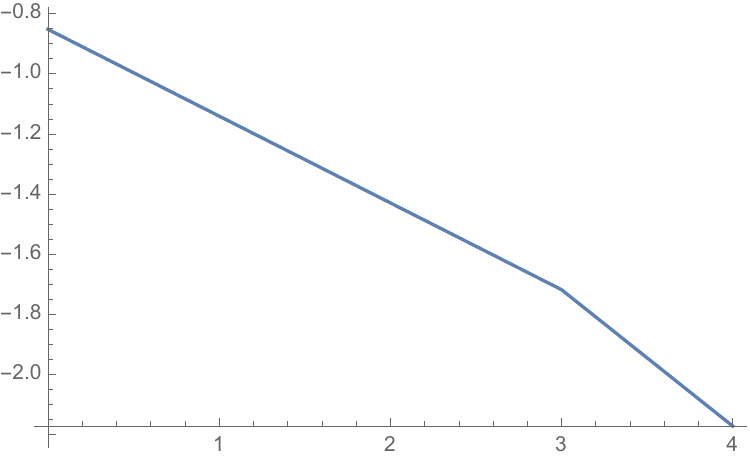}
    \caption{$\{14,45\}$}\label{fig:subdivision5}
\end{subfigure}
\hfill
\begin{subfigure}{0.22\textwidth}
    \includegraphics[width=\textwidth]{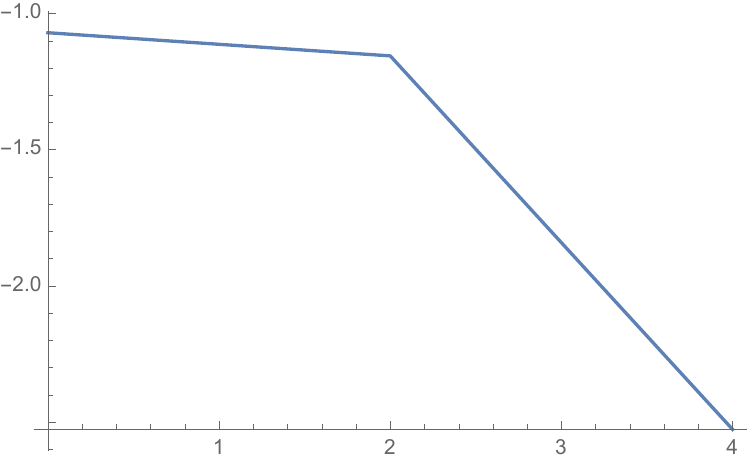}
    \caption{$\{13,35\}$}\label{fig:subdivision6}
\end{subfigure}
\hfill
\begin{subfigure}{0.22\textwidth}
    \includegraphics[width=\textwidth]{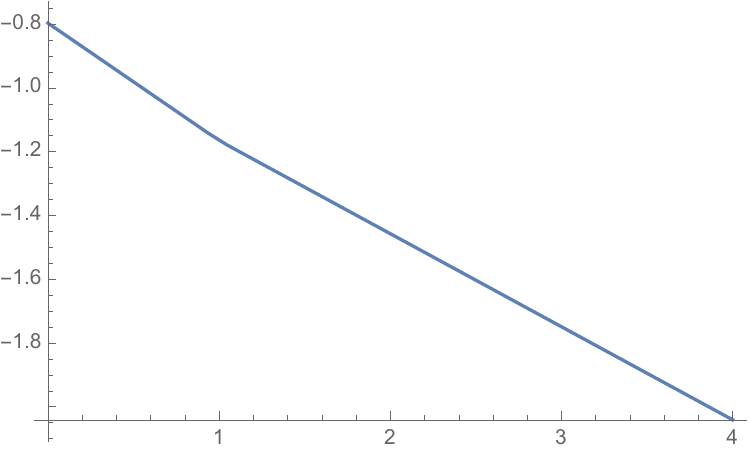}
    \caption{$\{12,25\}$}\label{fig:subdivision7}
\end{subfigure}
\hfill
\begin{subfigure}{0.22\textwidth}
    \includegraphics[width=\textwidth]{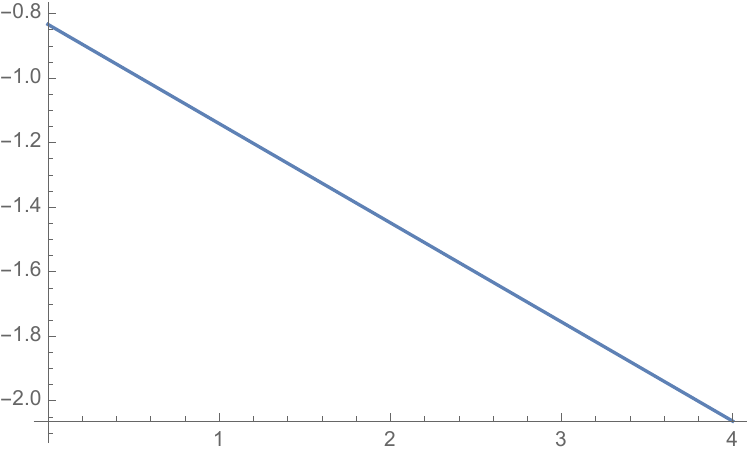}
    \caption{$\{15\}$}\label{fig:subdivision8}
\end{subfigure}
   \hfill
\caption{Piecewise-linear functions induced by $y^*_{\Delta}$ maximizing $S_{\Delta}$ for each subdivision $\Delta$ in Example~\ref{example:optimal-solutions-for-all-subdivisions}. The notation $ij$ in the subcaptions refers to the set $\{i,j\}$.}
\label{fig:five-points-optimal-y}
\end{figure}

The optimal tent function is supported on the subdivision $\{\{1,3\},\{3,5\}\}$. Also subdivisions $\{\{1,4\},\{4,5\}\}$ and $\{\{1,5\}\}$ give concave piecewise-linear functions $f^*_{\Delta}$, however, the value of $S_{\Delta}$ at $y^*_{\Delta}$ is less for these subdivisions (respectively $-2.32524$ and $-2.32556$) than for the optimal subdvision ($-2.31007$). Moreover, only for the optimal subdivision we obtain $y^*_{\Delta}$ that is close to the optimal $y^*_{\Delta}$ obtained by \texttt{LogConcDEAD}. In this example, \texttt{LogConcDEAD} gives $y^*_{1}=-1.070377$. For the eight subdivisions in Figure~\ref{fig:five-points-optimal-y}, we get the following values for the first coordinate of $y^*_{\Delta}$ using \texttt{Mathematica}: (a)  $-0.564769$ (b) $-1.13722$ (c) $-0.783036$ (d) $-0.595576$ (e) $-0.852468$ (f) $-1.07045$ (g) $-0.797148$ (h) $-0.833582$. Similarly for other coordinates of $y^*$, only $y^*_{\{\{1,3\},\{3,5\}\}}$ agrees with $y^*$ when rounded to the third decimal digit. This suggests a method for checking whether a subdivision supports the optimal tent function: The piecewise-linear function $f^*_{\Delta}$ should be concave and the height vector $y^*_{\Delta}$ should be close to $y^*$ obtained by \texttt{LogConcDEAD}.

We see from this example, if a subdivision $\Delta$ is incompatible with the optimal subdivision, then $f^*_{\Delta}$ might or might not be concave. The subdivisions $\{\{1,4\},\{4,5\}\}$ and $\{\{1,2\},\{2,5\}\}$ are both incompatible with the subdivision $\{\{1,3\},\{3,5\}\}$, and  $f^*_{\{\{1,4\},\{4,5\}\}}$ is concave whereas $f^*_{\{\{1,2\},\{2,5\}\}}$ is not concave. In all examples that we have done, if a subdivision $\Delta$ refines the optimal subdivision, then $f^*_{\Delta}$ is not concave and if a subdivision $\Delta$ is coarser than the optimal subdivision, then $f^*_{\Delta}$ is concave. Whether this is true in general, is left as an open question.
\end{example}

\section{Transcendentality and closed-form solutions}\label{section:one-dimension-d-equals-1}
In this section we use notions from geometric combinatorics to study the structure of (\ref{def: score equation matrix}). In particular, we will prove that the matrix $A$ is invertible. This will be our main tool in proving the transcendentality of log-concave MLE and deriving closed form solutions  in the one-dimensional one cell case using Lambert functions.
\subsection{Score equation matrix invertibility and transcendentality}

Towards proving transcendentality, we first investigate the invertibility of the matrix $A$.   

\begin{theorem}\label{theorem: invertibility of A}
Consider a point configuration $X=\left(x_1,\ldots, x_n\right)$ in $\R^d$, let $\Delta=\{\sigma_1,\ldots,\sigma_m\}$ be a
  maximal regular triangulation of $X$. The score equation matrix $A$ from~(\ref{equation:score_equations_matrix_form}) is invertible.
\end{theorem}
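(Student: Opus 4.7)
The plan is to prove invertibility by decomposing $A$ as a sum over simplices, reducing to a local nilpotent structure on each simplex block, and then globalizing. I would proceed as follows.

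First I would make explicit the decomposition $A = \sum_{\sigma \in \Delta} A_\sigma$, where $A_\sigma$ is the $n \times n$ matrix whose only nonzero entries sit in the $\sigma \times \sigma$ principal submatrix and record the contribution of the single simplex $\sigma$ to the formulas for $A_{jj}$ and $A_{ij}$ stated in Section~\ref{sect:LogConcMLE}. On $\sigma \times \sigma$, the block $A_\sigma$ is exactly the one-simplex score matrix obtained by differentiating $F_\sigma(y) = \sum_{j \in \sigma} c_j^\sigma\, e^{y_j}$ with Lagrange coefficients $c_j^\sigma := \vol(\sigma)/\prod_{\alpha \in \sigma \setminus j}(y_j - y_\alpha)$. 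A short computation of $\partial F_\sigma/\partial y_j$ then yields the factorization
\[
A_\sigma|_{\sigma \times \sigma} \;=\; M_\sigma \cdot \mathrm{diag}(c^\sigma_j),
\]
where $M_\sigma$ has off-diagonal entries $(M_\sigma)_{jk} = 1/(y_k - y_j)$ and diagonal entries chosen so that every column sum of $M_\sigma$ equals $1$.

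The heart of the argument is to show that $N_\sigma := M_\sigma - I$ is nilpotent, which gives $\det M_\sigma = 1$ and hence invertibility of the block $A_\sigma|_{\sigma \times \sigma}$ with determinant $\prod_{j \in \sigma} c_j^\sigma$. To prove nilpotency I would pass to the Vandermonde basis $\{\mathbf{1},\, y,\, y^2,\, \ldots,\, y^d\}$ of $\R^{|\sigma|}$, where the vector $y^k$ has components $y_j^k$ for $j \in \sigma$; this is a basis since the heights $\{y_j : j \in \sigma\}$ are distinct. A direct divided-difference computation shows
\[
N_\sigma^T\, y^k \;=\; (k - d - 1)\, y^{k-1} \;-\; T_1\, y^{k-2} \;-\; T_2\, y^{k-3} \;-\; \cdots \;-\; T_{k-1}\, \mathbf{1},
\]
where $T_\ell := \sum_{j \in \sigma} y_j^\ell$. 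Hence $N_\sigma^T$ is strictly upper triangular in this basis, so both $N_\sigma^T$ and $N_\sigma$ are nilpotent.

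It remains to promote invertibility of each local block to invertibility of the full matrix $A = \sum_\sigma A_\sigma$ as a matrix of rational functions in $y_1, \ldots, y_n$. The natural ingredient is the identity $A(y)\, e^y = \nabla F_\Delta(y)$ together with strict convexity of $F_\Delta(y) = \sum_\sigma F_\sigma(y)$: because $\Delta$ is maximal, every $x_j$ lies in some simplex, so the Hessian $\sum_\sigma H_\sigma$ is a sum of per-simplex positive semidefinite matrices whose only common null vector is zero, hence positive definite on $\R^n$. Combined with the explicit local factorizations above, this should force $\det A$ to be a nonzero rational function. I expect the main obstacle to be making this final step rigorous, since the column spaces of the matrices $A(y)$ genuinely vary with $y$; a tractable fallback is to verify $\det A(y_0) \neq 0$ at a single carefully chosen specialization $y_0$ (which already implies invertibility over the rational function field), using the upper-triangular Vandermonde structure already established for each simplex block.
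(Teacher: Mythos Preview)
Your per-simplex analysis is correct and elegant: the factorization $A_\sigma|_{\sigma\times\sigma}=M_\sigma\,\mathrm{diag}(c_j^\sigma)$ holds, and the divided-difference computation $(N_\sigma^T y^k)_j=-\sum_{i\neq j}(y_j^k-y_i^k)/(y_j-y_i)$ does unwind to your formula, so $N_\sigma$ is nilpotent and each block $A_\sigma|_{\sigma\times\sigma}$ is invertible with determinant $\prod_{j\in\sigma}c_j^\sigma$.

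The genuine gap is the globalization. Invertibility of each $A_\sigma$ on its own block says nothing about invertibility of the sum $A=\sum_\sigma A_\sigma$, because the blocks overlap at shared vertices. Your convexity route does not close this: strict convexity of $F_\Delta$ gives positive definiteness of the \emph{Hessian} of $F_\Delta$, which is the Jacobian of $y\mapsto A(y)e^y$ and equals $A(y)\,\mathrm{diag}(e^y)$ \emph{plus} the terms coming from differentiating the rational entries of $A$ in $y$. There is no reason those extra terms cannot make the Hessian positive definite while $A(y)$ itself is singular, so positive definiteness of $\mathrm{Hess}\,F_\Delta$ does not imply $\det A\neq 0$. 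Your specialization fallback is valid in principle (a single $y_0$ with $\det A(y_0)\neq0$ suffices), but you have not produced one, and the upper-triangular Vandermonde structure you found lives in a different basis on each simplex; since simplices share vertices, these bases are incompatible and do not assemble into a global triangular form.

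The paper avoids this difficulty entirely by a direct degree argument on the full matrix: multiply column $j$ of $A$ by $\prod_{\alpha\in\mathcal N(j)}(y_j-y_\alpha)^2$ to obtain a polynomial matrix $B$, and observe that in column $j$ the diagonal entry $B_{jj}$ has degree $2|\mathcal N(j)|-d$ while every off-diagonal entry has degree $2|\mathcal N(j)|-d-1$. Hence in the Leibniz expansion of $\det B$ the diagonal product has strictly larger total degree than any other permutation term and cannot be cancelled. This handles the global sum $\sum_\sigma A_\sigma$ in one stroke without ever needing the local invertibility you proved. Your nilpotency lemma is a nice structural fact about the single-simplex block, but to finish your line of argument you would still need a separate global ingredient --- and the natural one is precisely the degree count the paper uses.
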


\begin{definition}\label{def: neighborhood}
Given a triangulation $\Delta$, we define the \emph{neighborhood} $\sN(j)$ of a vertex $j$ in $\Delta$ to be the set of vertices $$
\sN(j)=\left\{i: (i,j) \in \sigma_k \text{ for some }k\right\}.
$$
\end{definition}
Before giving the proof of Theorem~\ref{theorem: invertibility of A}, we illustrate the construction in the proof with a small example. 

\begin{example}
   Let $X=(x_1,x_2,x_3,x_4)$ be a four point configuration
   in $\R^2$ with $\Delta=\{\sigma_1,\sigma_2\}$, where
   $\sigma_1=\{1,2,3\}$ and $\sigma_2=\{2,3,4\}$. Let $A$ be the score equation matrix for the entire
   regular triangulation $\Delta$. Let us denote the
   difference $y_i-y_j $ by $y_{ij}$. Then $A=
   A(\sigma_1)+A(\sigma_2)$, where
    {\small
    \begin{align*}
    \frac{A(\sigma_1)}{\vol(\sigma_1)} &=
    \left[ \begin {array}{cccc} {\frac {1}{y_{{12}}y_{{13}}}}-{\frac {1
}{{y_{{12}}}^{2}y_{{13}}}}-{\frac {1}{y_{{12}}{y_{{13}}}^{2}}}&{
\frac {1}{{y_{{21}}}^{2}y_{{23}}}}&{\frac {1}{{y_{{31}}}^{2}y_{{32}}}
}&0\\ \noalign{\medskip}{\frac {1}{{y_{{12}}}^{2}y_{{13}}}}&{\frac {
1}{y_{{21}}y_{{23}}}}-{\frac {1}{{y_{{21}}}^{2}y_{{23}}}}-{\frac {1}{y_{{21}}{y_{{23}}}^{2}}}&{\frac {1}{y_{{31}}{y_{{32}}}^{2}}}&0\\ 
\noalign{\medskip}{\frac {1}{y_{{12}}{y_{{13}}}^{2}}}&{\frac {1}{
y_{{21}}{y_{{23}}}^{2}}}&{\frac {1}{y_{{31}}y_{{32}}}}-{\frac {1}{{y_{{31}}}^{2}y_{{32}}}}-{\frac {1}{y_{{31}}{y_{{32}}}^{2}}}&0
\\ \noalign{\medskip}0&0&0&0\end {array} \right],\\
\frac{ A(\sigma_2)}{\vol(\sigma_2)}&=
\left[ \begin {array}{cccc} 0&0&0&0\\ \noalign{\medskip}0&{\frac {1
}{y_{{23}}y_{{24}}}}-{\frac {1}{{y_{{23}}}^{2}y_{{24}}}}-{\frac {1}{y_
{{23}}{y_{{24}}}^{2}}}&{\frac {1}{{y_{{32}}}^{2}y_{{34}}}}&{\frac {1
}{{y_{{42}}}^{2}y_{{43}}}}\\ \noalign{\medskip}0&{\frac {1}{{y_{{23}}
}^{2}y_{{24}}}}&{\frac {1}{y_{{32}}y_{{34}}}}-{\frac {1}{{y_{{32}}}^{2}y_{{34}}}}-{\frac{1}{y_{{32}}{y_{{34}}}^{2}}}&{\frac {1}{y_{{42}}{
y_{{43}}}^{2}}}\\ \noalign{\medskip}0&{\frac {1}{y_{{23}}{y_{{24}}}^{
2}}}&{\frac {1}{y_{{32}}{y_{{34}}}^{2}}}&{\frac {1}{y_{{42}}y_{{43}}
}}-{\frac {1}{{y_{{42}}}^{2}y_{{43}}}}-{\frac {1}{y_{{42}}{y_{{43}}}^{2}}}\end {array} \right].
\end{align*}}
We define matrix $B$ to be the matrix $A$ with its $j$-th column multiplied by $\prod_{i\in \sN(j)}y_{ji}^2$, for all $j$ from $1$ to $4$. We obtain the following matrices
{\scriptsize
\begin{align*}
  \frac{B(\sigma_1)}{\vol(\sigma_1)} 
  &= \left[ \begin {array}{cccc} y_{{13}}y_{{12}}-y_{{12}}-y_{{13}}&{y_{{24}}}^{2}y_{{23}}&{y_{{34}}}^{2}y_{{32}}&0\\ \noalign{\medskip}y_{{13}}&y_{{21}}y_{{23}}{y_{{24}}}^{2}-{y_{{24}}}^{2}y_{{21}}-y_{{23}}{y_{{24}}}^{2}&{y_{{34}}}^{2}y_{{31}}&0\\
  \noalign{\medskip}y_{{12}}&{y_{{24}}}^{2}y_{{21}}&y_{{31}}y_{{32}}{y_{{34}}}^{2}-{y_{{34}}}^{2}y_{{31}}-{y_{{34}}}^{2}y_{{32}}&0\\
  \noalign{\medskip}0&0&0&0\end {array}\right],\\
 \frac{ B(\sigma_2)}{\vol(\sigma_2)} 
  &= \left[ \begin {array}{cccc} 0&0&0&0\\\noalign{\medskip}0&{y_{{21}}}^{2}y_{{23}}y_{{24}}-{y_{{21}}}^{2}y_{{23}}-{y_{{21}}}^{2}y_{{24}}&{y_{{31}}}^{2}y_{{34}}&y_{{43}}\\
  \noalign{\medskip}0&{y_{{21}}}^{2}y_{{24}}&y_{{32}}{y_{{31}}}^{2}y_{{34}}-{y_{{31}}}^{2}y_{{32}}-{y_{{31}}}^{2}y_{{34}}&y_{{42}}\\
  \noalign{\medskip}0&{y_{{21}}}^{2}y_{{23}}&{y_{{31}}}^{2}y_{{32}}&y_{{43}}y_{{42}}-y_{{42}}-y_{{43}}\end {array}
 \right]. 
\end{align*}}
  The product of the diagonal entries of
                    $B=B(\sigma_1)+B(\sigma_2)$ is a
                    polynomial of degree 12. Whereas a term in the
                    expansion of the determinant of $B$ with
                    off-diagonal entries has at most degree 10.
\end{example}

\begin{proof}[Proof of Theorem~\ref{theorem: invertibility of A}]

The score equation matrix $A$ associated to a maximal regular triangulation $\Delta$ can be written as 
  \begin{equation*}
     A= \sum_{\sigma \in \Delta } A(\sigma),
 \end{equation*}
where the entries of $A(\sigma)$ for $i\neq j$ are

 \begin{align*}
   A(\sigma)_{i,j}&= \vol(\sigma)\left(\prod_{\alpha\in \sigma \backslash \{j\}}\frac{1}{(y_j-y_\alpha)}\right)\left(\frac{1}{y_j-y_i}\right),\\
   A(\sigma)_{j,j} &=\vol(\sigma) \left(\prod_{\alpha\in \sigma \backslash  \{j\}}\frac{1}{(y_j-y_\alpha)}\right)\left(1-\sum_{\alpha\in \sigma \backslash \{j\}
   }\frac{1}{(y_j-y_\alpha)}\right).
 \end{align*}
 The matrix $A(\sigma)$ is sparse: If $i$ or $j$ does not belong to $\sigma$ then $A_{i,j}(\sigma)=0$.

   Let $B$ (resp. $B(\sigma)$) be the matrix that is obtained by multiplying the $j$-th column of $A$ (resp. $A(\sigma)$) by $\left(\prod_{\alpha \in \sN(j)}(y_j-y_{\alpha})^2\right)$ for $j=1,\ldots,n$:
  \begin{equation}
  \label{eq:Columns}
      B_{.\,,\,j}= A_{.\,,\,j}\left(\prod_{\alpha \in \sN(j)}(y_j-y_{\alpha})^2\right)= \sum_{\sigma \in \Delta } A(\sigma)_{.\,,\,j}\left(\prod_{\alpha \in \sN(j)} (y_j-y_{\alpha})^2\right).
    \end{equation}

 Fix $\sigma\in \Delta$. We describe separately the off-diagonal and diagonal entries of $B(\sigma)$. For $i,j\in \sigma$ and $i \neq j$ we get \begin{align*}
     B(\sigma)_{i,j}&= A(\sigma)_{i,j}\left(\prod_{\alpha\in \sigma \backslash \{j\}}(y_j-y_\alpha)^2\right)\left(\prod_{\alpha \in \sN(j)\backslash\sigma }(y_j-y_{\alpha})^2\right)\\
     &=\frac{\vol(\sigma)}{y_j-y_i}\left(\prod_{\alpha\in \sigma \backslash \{j\}}\frac{1}{(y_j-y_\alpha)} \prod_{\alpha\in \sigma \backslash \{j\}}(y_j-y_\alpha)^2\right) \left(\prod_{\alpha \in \sN(j)\backslash\sigma }(y_j-y_{\alpha})^2\right)\\
     &= \vol(\sigma)\left(\prod_{\alpha\in \sigma \backslash \{i,j\}}(y_j-y_\alpha)\right) \left(\prod_{\alpha \in \sN(j)\backslash\sigma }(y_j-y_{\alpha})^2\right).
 \end{align*}
 And for the diagonal entries
 \begin{align*}
     B(\sigma)_{j,j}&=A(\sigma)_{j,j} \left(\prod_{\alpha \in \sN(j) }(y_j-y_{\alpha})^2\right)\\
     &=\vol(\sigma)\left(\prod_{\alpha\in \sigma \backslash  \{j\}}\frac{1}{(y_j-y_\alpha)}\right)\left(1-\sum_{\alpha\in \sigma \backslash \{j\}}\frac{1}{(y_j-y_\alpha)}\right) \left(\prod_{\alpha \in \sN(j) }(y_j-y_{\alpha})^2\right)\\
     &=\vol(\sigma)\left(\prod_{\alpha\in \sigma \backslash  \{j\}}(y_j-y_\alpha)-\sum_{k\in \sigma \backslash \{j\}}\;\prod_{\alpha\in \sigma \backslash \{j,k\}}(y_j-y_\alpha)\right) \left(\prod_{\alpha \in \sN(j)\backslash\sigma }(y_j-y_{\alpha})^2\right). 
 \end{align*}
Given a polynomial $f\in \R[y_1,\ldots,y_n]$, we can rewrite $f=\sum_{i=0}^{d_j}f_i y_j^i$ as a univariate polynomial in $y_j$ of degree $d_j$, where $f_i\in \R[y_i: i\neq j]$ is a constant with respect to $y_j$. We then define the \emph{initial form} of $f$ with respect to $j$ to be  
$$ \text{in}_j(f)= f_{d_j}y_j^{d_j}.
$$
We observe that for the off-diagonal entries $B(\sigma)_{i,j}$, the initial form with respect to $j$ is 
 \begin{equation*}
     \text{in}_{j}(B(\sigma)_{i,j})= y_j^{2\gamma_j - d-1},
 \end{equation*}
 where $\gamma_j=\vert\sN(j)\vert$ is the number of vertices adjacent to $j$ in $\Delta$. 
 Whereas for the diagonal entry $B(\sigma)_{j,j}$, the initial form is
 \begin{equation*}
     \text{in}_{j}(B(\sigma)_{j,j})= y_j^{2\gamma_j- d}.
 \end{equation*}
 In both cases, the degree of the initial form is the degree of the polynomial. We sum the matrices $B(\sigma)$ for $\sigma\in \Delta$, to get $B$ and note that the coefficient of the monomial $y_j^{2\gamma_j -d}$ in $B_{j,j}$ is the number of simplices in $\Delta$ containing vertex $j$. Hence, using the Leibniz formula to compute the determinant of $B$, we get that the product of diagonal entries is a polynomial of degree
 $\displaystyle\left( \sum_{j=1}^n 2\gamma_j -d \right)$. All off-diagonal entries in that column of $B$ are of degree one smaller, thus any monomial in the expanded form of the
 determinant with off-diagonal entries must have degree at least two smaller than the product of diagonal entries. The following equality is a direct consequence of~(\ref{eq:Columns})
 \begin{equation*}\label{Eq: det C vs det A}
       \det\left(B\right)= \det\left(A\right)\prod_{j=1}^n \left(\prod_{\alpha \in \sN(j)}(y_j-y_{\alpha})^2\right). 
   \end{equation*}
 Since $\det(B)$ is not identically $0$, $\det(A)$ is not identically zero, hence $A$ is invertible over the field of rational functions. 
\end{proof}

The proof of Theorem \ref{theorem: invertibility of A} inspires the following  conjecture about the combinatorial properties of the determinant.

\begin{conjecture}
The sum over terms of highest total degree of the numerator of $\det(A)$ is
$$
\prod_{j=1,\ldots,n} \left(\sum_{\sigma \in \Delta \text{ s.t. } j \in \sigma} \vol(\sigma) \prod_{\alpha \in \sN(j) : \, \alpha \not \in \sigma} \left(y_j - y_{\alpha}\right)\right).
$$
\end{conjecture}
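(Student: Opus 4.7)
The plan is first to pin down what the statement means. I interpret the conjecture as follows: set $D := \prod_{j=1}^n \prod_{\alpha \in \sN(j)}(y_j - y_{\alpha})$, a polynomial of degree $\sum_j \gamma_j = 2E$ where $E$ is the number of edges of $\Delta$. Then ``the numerator of $\det(A)$'' is the polynomial $N := \det(A)\cdot D$, and ``highest degree component'' means the top homogeneous component under the uniform scaling $y_i \mapsto ty_i$. The approach is then a careful bookkeeping: decompose each entry of $A$ into its homogeneous parts under this scaling, use the Leibniz expansion, and isolate the unique summand of top degree.

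Concretely: every off-diagonal entry $A_{i,j}$ with $i\ne j$ is homogeneous of degree $-(d+1)$, while the explicit formula from Section~2 decomposes each diagonal entry as $A_{j,j} = P_j + Q_j$, where
$$
P_j \;=\; \sum_{\sigma \in \Delta,\, j\in\sigma} \frac{\vol(\sigma)}{\prod_{\alpha\in\sigma\setminus\{j\}}(y_j - y_{\alpha})}
$$
is homogeneous of degree $-d$, and $Q_j = -\sum_{\sigma\ni j}\vol(\sigma)\prod_{\alpha\in\sigma\setminus\{j\}}\frac{1}{y_j-y_\alpha}\sum_{k\in\sigma\setminus\{j\}}\frac{1}{y_j-y_k}$ is homogeneous of degree $-(d+1)$. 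In the Leibniz expansion $\det(A) = \sum_\pi \sgn(\pi)\prod_j A_{j,\pi(j)}$, a permutation $\pi$ with $k$ non-fixed indices contributes a term of total degree at most $-nd - k$: each of the $k$ off-diagonal factors is homogeneous of degree $-(d+1)$, while each of the $n-k$ diagonal factors has degree at most $-d$. Since $\pi\ne \mathrm{id}$ forces $k\ge 2$, all non-identity contributions live in degrees $\le -nd-2$. For $\pi = \mathrm{id}$, expanding $\prod_j(P_j+Q_j)$, the unique summand of degree $-nd$ is $\prod_j P_j$. Hence the top homogeneous component of $\det(A)$ equals $\prod_j P_j$ exactly.

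It remains to clear denominators. Using $j\notin\sN(j)$ and $\sigma\setminus\{j\}\subseteq\sN(j)$, rewrite each $P_j$ over the common denominator $\prod_{\alpha\in\sN(j)}(y_j-y_\alpha)$:
$$
P_j \;=\; \frac{\sum_{\sigma\ni j} \vol(\sigma) \prod_{\alpha\in\sN(j)\setminus\sigma}(y_j-y_{\alpha})}{\prod_{\alpha\in\sN(j)}(y_j-y_{\alpha})}.
$$
Multiplying $\prod_j P_j$ by $D$ cancels the denominators and yields exactly the conjectured product, which is homogeneous of degree $\sum_j(\gamma_j - d) = 2E - nd$, matching $\deg D + (-nd)$. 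The main obstacle, in my view, is not the computation itself but fixing the correct interpretation of the claim — the denominator is not specified in the statement, and one has to recognize $D$ (rather than the $D^2$ appearing in the proof of Theorem~\ref{theorem: invertibility of A}) as the natural choice forced by the degree of the conjectured expression. Once this is in place, the scaling decomposition together with the Leibniz degree estimate $k\ge 2$ for non-identity permutations leaves $\prod_j P_j$ as the sole contribution at the top degree, and the conjecture follows.
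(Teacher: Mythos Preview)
The paper does not prove this statement: it is stated as a conjecture immediately after Theorem~\ref{theorem: invertibility of A}, with no argument given. So there is no ``paper's own proof'' to compare against; you are attempting to settle an open claim.

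Your core computation is correct and rather elegant. The homogeneous-grading decomposition $A_{j,j}=P_j+Q_j$ with $\deg P_j=-d$, $\deg Q_j=-(d+1)$, together with the observation that every off-diagonal entry is homogeneous of degree $-(d+1)$, does force the top homogeneous component of $\det(A)$ to be exactly $\prod_j P_j$ via the Leibniz expansion (the bound $k\ge 2$ for non-identity permutations is the key point, and you state it cleanly). Rewriting each $P_j$ over the denominator $\prod_{\alpha\in\sN(j)}(y_j-y_\alpha)$ then gives the conjectured product as numerator. One should also check that $\prod_j P_j$ is not identically zero, so that it really is the top component; this follows since the $y_j^{\gamma_j-d}$-coefficient in the $j$th factor is $\sum_{\sigma\ni j}\vol(\sigma)>0$, but you might make that explicit. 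Incidentally, this re-proves Theorem~\ref{theorem: invertibility of A} as a by-product. Your approach is in the same spirit as the paper's proof of that theorem (diagonal beats off-diagonal in degree), but your use of a single global homogeneous grading is cleaner than the paper's variable-by-variable initial-form argument.

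The one genuine gap is the interpretation step. You declare that ``the numerator of $\det(A)$'' is $N:=\det(A)\cdot D$ and call it a polynomial, but you never verify that $D$ actually clears all poles of $\det(A)$. The paper's own proof of Theorem~\ref{theorem: invertibility of A} only establishes that $\det(A)\cdot D^2=\det(B)$ is a polynomial; passing from $D^2$ to $D$ amounts to showing that $(y_i-y_j)^2$ divides $\det(B)$ for every edge $(i,j)$ of $\Delta$, which is not obvious from the column-clearing construction (individual off-diagonal entries of $A$ genuinely carry $(y_j-y_i)^2$ in their denominators). Without this, your $N$ may not be a polynomial, and then the phrase ``highest degree component of the numerator'' no longer has the meaning you assign to it. What you have unambiguously proved is that the top homogeneous component of $\det(A)$, as a rational function, equals the conjectured product divided by $D$. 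To upgrade this to the conjecture as you have interpreted it, you still owe the divisibility $D\mid\det(B)$; alternatively, if the intended reading is ``numerator in lowest terms,'' you would need to identify the reduced denominator of $\det(A)$ and match its top component to $D$. Either way, that last step is missing.
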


Since $A$ is invertible,~(\ref{equation:score_equations_matrix_form}) can be rewritten as
$$
e^y = A^{-1} w
$$
where entries of $A$ are rational functions in $\mathbb{R}(y_1,\ldots,y_n)$.

\begin{cor} \label{conjecture:polynomial-exponential-form}
Fix a maximal triangulation $\Delta$. Then the critical equations~(\ref{critical-equations}) can be written in the form
\begin{equation} \label{eqn:polynomial-exponential-form}
\begin{gathered}
    \exp(y_1)=p_1(y_1,y_2,\dots,y_n) \\
    \exp(y_2)=p_2(y_1,y_2,\dots,y_n) \\
    \vdots \\
    \exp(y_n) = p_n(y_1,y_2,\dots,y_n) 
\end{gathered}
\end{equation}
where $p_1,\ldots,p_n \in \mathbb{R}(y_1,\ldots,y_n)$. If $x_1,\ldots,x_n \in \mathbb{Q}^d$, then $p_1,\ldots,p_n \in \mathbb{Q}(y_1,\ldots,y_n)$.
\end{cor}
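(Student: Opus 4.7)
The plan is to apply Theorem \ref{theorem: invertibility of A} directly: once we know that $A$ is invertible over the field $\mathbb{R}(y_1,\ldots,y_n)$, the desired form is obtained by simply left-multiplying $Ae^y = w$ by $A^{-1}$. Concretely, I would set
\[
\begin{pmatrix} p_1(y) \\ \vdots \\ p_n(y) \end{pmatrix} \;:=\; A^{-1} w,
\]
observe that each $p_i \in \mathbb{R}(y_1,\ldots,y_n)$, and conclude that the system of critical equations is equivalent to $\exp(y_i) = p_i(y_1,\ldots,y_n)$ for $i=1,\ldots,n$.

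For the second assertion, I would trace the explicit formulas for the entries of $A$ given right after Definition \ref{def: score equation matrix}. Every entry is a $\mathbb{Q}$-linear combination of terms of the form $\vol(\sigma)$ times a rational function in differences $y_j - y_\alpha$ with coefficients in $\mathbb{Z}$. When $x_1,\ldots,x_n \in \mathbb{Q}^d$, the vertices of every simplex $\sigma \in \Delta$ have rational coordinates, so $\vol(\sigma) \in \mathbb{Q}$. This forces $A \in \mathbb{Q}(y_1,\ldots,y_n)^{n\times n}$.

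Since inverting a matrix is a rational operation on its entries (for instance through the adjugate-over-determinant formula), the matrix $A^{-1}$ lies in $\mathbb{Q}(y_1,\ldots,y_n)^{n\times n}$ as well. Finally, $p_i$ is a $\mathbb{Q}(y_1,\ldots,y_n)$-linear combination of the $w_j$, so treating the weights as rational constants (which is the natural convention, and is compatible with the symbolic treatment elsewhere in the paper) gives $p_i \in \mathbb{Q}(y_1,\ldots,y_n)$, as required. There is no serious obstacle here: the result is essentially a direct corollary of Theorem \ref{theorem: invertibility of A}, and the only minor bookkeeping is to check that rationality of the $x_i$ transfers through the volumes to the entries of $A$ and then through the inversion to $A^{-1}$.
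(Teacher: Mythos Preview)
Your proposal is correct and matches the paper's approach exactly: the paper simply notes that since $A$ is invertible one may rewrite $Ae^y=w$ as $e^y=A^{-1}w$, which is precisely what you do, and your additional bookkeeping for the rational case (volumes $\vol(\sigma)\in\mathbb{Q}$ when $x_i\in\mathbb{Q}^d$, and rationality preserved under matrix inversion) fills in details the paper leaves implicit. Your caveat about the weights is apt; the paper silently treats the $w_j$ as rational when invoking this corollary in the transcendentality theorem.
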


We will explore rational-exponential systems of the form~(\ref{eqn:polynomial-exponential-form}) further in Sections~\ref{section:one-dimension-one-cell}-\ref{section:three-points-on-a-line}. The following is a result from transcendental number theory, for a textbook reference see Theorem 1.4 of \cite{BakerTranscendentalNumberTheoryTEXT1990}.

\begin{theorem}[Lindemann-Weierstrass]\label{theorem:lindemann-weierstrass}
If $y_1,\dots,y_r$ are distinct algebraic numbers then the numbers $\exp(y_1), \dots, \exp(y_r)$ are linearly independent over the algebraic numbers.
\end{theorem}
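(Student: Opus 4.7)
The plan is to follow the classical Hermite--Lindemann method: derive a contradiction from an assumed linear dependence by constructing an auxiliary analytic function, proving that a certain quantity is a nonzero rational integer, and simultaneously bounding it to have absolute value strictly less than $1$. Suppose for contradiction that $\sum_{i=1}^r \beta_i \exp(y_i) = 0$ for algebraic numbers $\beta_1,\dots,\beta_r$, not all zero, and distinct algebraic $y_1,\dots,y_r$. My first step would be a symmetrization: by multiplying the relation by its conjugates under a sufficiently large Galois group containing all $\beta_i$ and $y_i$, and invoking the fundamental theorem of symmetric polynomials, one reduces to the case in which the coefficients are rational integers and the exponents form a union of complete sets of conjugate algebraic integers after scaling by a common denominator $\ell \in \mathbb{Z}_{>0}$.

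For each sufficiently large prime $p$ I would then introduce the auxiliary polynomial
\begin{equation*}
f_p(x) \;=\; \frac{\ell^{rp}\, x^{p-1} \prod_{j=1}^{r}(x - y_j)^p}{(p-1)!}
\end{equation*}
together with the entire function $F_p(x) = \sum_{k \ge 0} f_p^{(k)}(x)$, which is a finite sum since $f_p$ is a polynomial. Integration by parts yields the Hermite identity
\begin{equation*}
F_p(0)\exp(y_i) - F_p(y_i) \;=\; \exp(y_i)\int_0^{y_i} e^{-u} f_p(u)\,du.
\end{equation*}
Multiplying by $\beta_i$, summing over $i$, and using the hypothesized dependence $\sum \beta_i \exp(y_i) = 0$ collapses the $F_p(0)$ contribution and gives
\begin{equation*}
-\sum_{i=1}^r \beta_i F_p(y_i) \;=\; \sum_{i=1}^r \beta_i \exp(y_i)\int_0^{y_i} e^{-u} f_p(u)\,du.
\end{equation*}

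The main obstacle, and the heart of the argument, is arithmetic control of the left-hand side. A case analysis of derivatives shows that $f_p^{(k)}(y_j)$ is an algebraic integer divisible by $p$ for every $k$, and that $f_p^{(k)}(0)$ is divisible by $p$ except when $k = p-1$, in which case it equals $\ell^{rp} \prod_j(-y_j)^p$. After applying the symmetrization, the left-hand side becomes a rational integer congruent modulo $p$ to a nonzero integer that does not depend on $p$; choosing $p$ larger than the absolute value of this integer, the norms of the $\beta_i$, and $\ell\,|\!\prod y_j|$, the sum is therefore a \emph{nonzero} rational integer. On the analytic side, the right-hand side is dominated by $C^p/(p-1)!$ for a constant $C$ depending only on the $y_j$ and $\beta_i$, which tends to $0$ as $p \to \infty$, yielding the desired contradiction.

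The subtlest step is the symmetrization: ensuring that the combined expression is an element of $\mathbb{Z}$ rather than merely an algebraic integer, while simultaneously preserving the non-divisibility by $p$ of the distinguished contribution from $f_p^{(p-1)}(0)$. This requires careful tracking of how Galois conjugation interacts with both the polynomial $f_p$ and the coefficients $\beta_i$, and is the step in which the distinctness of the $y_i$ is essentially used, via the factor $\prod(y_i - y_j)^p$ surviving as a nonzero algebraic integer norm after symmetrization.
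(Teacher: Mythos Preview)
The paper does not prove this theorem at all: it is quoted as a classical result with a reference to Baker's textbook. Your proposal therefore goes well beyond what the paper does, and the overall strategy you describe---Hermite's auxiliary polynomial, the integral identity, and the ``nonzero integer of absolute value less than $1$'' contradiction---is indeed the classical route.

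That said, there is a genuine gap in your arithmetic step. With your choice
\[
f_p(x)=\frac{\ell^{rp}\,x^{p-1}\prod_{j=1}^{r}(x-y_j)^{p}}{(p-1)!},
\]
the only derivative that fails to be divisible by $p$ is $f_p^{(p-1)}(0)$, so the term not divisible by $p$ lives in $F_p(0)$. But in your identity the coefficient of $F_p(0)$ is exactly $\sum_i\beta_i\exp(y_i)=0$, so that contribution vanishes. What survives is $-\sum_i\beta_i F_p(y_i)$, and by your own analysis every $f_p^{(k)}(y_j)$ is divisible by $p$; hence so is the entire left-hand side, and you cannot conclude it is nonzero from non-divisibility by $p$.

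The standard repair is either (i) to use a \emph{family} of auxiliary polynomials $f_{p,i}$ in which the factor $(x-y_i)^{p-1}$ replaces $(x-y_i)^{p}$, so that the distinguished contribution sits at $y_i$ rather than at $0$, and then take a product over $i$ before norming down to $\mathbb{Z}$; or (ii) to first translate so that $0$ is one of the exponents with nonzero coefficient, in which case the $x^{p-1}$ factor should \emph{replace} the corresponding $(x-y_j)^p$, not be adjoined to it. Either way the symmetrization you flag as delicate is indeed the crux, but the auxiliary polynomial must be set up so that the non-vanishing-mod-$p$ term actually survives the use of the linear relation.
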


A special case of the Lindemann-Weierstrass theorem is the Lindemann theorem which states that $\exp(y)$ is transcendental for algebraic $y \neq 0$.

\begin{theorem}\label{th:transcend}
Let $\mathcal{X}\subseteq \mathbb{Q}^d$.
If $\operatorname{vol}(\conv(\mathcal{X}) )\neq 1$, then there exists an open ball of weights $\mathcal{U}\subseteq \mathbb{R}^n$ such that for every $w \in \mathcal{U}$, 
at least one coordinate  of the optimal height vector $y^*$ is transcendental.
If $\operatorname{vol}(\conv(\mathcal{X}) )= 1$, then all coordinates of $y^*$ are algebraic if and only if $w$ is in the cone over the secondary polytope $\Sigma(X)$. 
\end{theorem}

\begin{proof}
Let $\Delta$ be a maximal regular triangulation. According to \cite[Theorem 1.2]{robeva2019geometry}, there exists an open ball $\mathcal{U}\subseteq \mathbb{R}^n$ of weights  that induces the maximal regular triangulation $\Delta$. Take any $w \in \mathcal{U}$ and consider the rational-exponential system~(3.2) for this choice of $\Delta$ and $w$. Then we have $\exp(y_1)=p_1(y_1,\ldots,y_n)$ where $p_1$ is a rational function in $\mathbb{Q}(y_1,\ldots,y_n)$. Assume that $y_1,\ldots,y_n$ are algebraic. By Lindemann's theorem $\exp(y_1)$ is algebraic if and only if $y_1=0$. 

However, $p(y_1,\ldots,y_n)$ is always algebraic, since $y_1,\ldots,y_n$ are algebraic and the algebraic numbers form a field. 
Hence $y_1=0$. We can argue similarly that $y_i=0$ for all $i$. The vector $y=(0,\ldots,0)$ belongs to the boundary of the Samworth body if and only if the volume of the convex hull of $X$ is $1$. In this case, $y$ is the optimal solution if $w$ is in the cone over the secondary polytope $\Sigma(X)$ by~\cite[Corollary 3.9]{robeva2019geometry}.
\end{proof}

\subsection{One cell in one dimension}\label{section:one-dimension-one-cell}

In this section we apply the invertibility of the score equation matrix to give a closed form solution to log-concave maximum likelihood estimator in case the logarithm of the optimal density is a linear function on the real line. If $X=(x_1,x_2) \subset \mathbb{R}$, then 
$$
A = \vol({\sigma}) \begin{bmatrix}
 \frac{1}{y_1-y_2} - \frac{1}{(y_1-y_2)^2}  &  \frac{1}{(y_1-y_2)^2}\\
  \frac{1}{(y_1-y_2)^2} & -\frac{1}{y_1-y_2} - \frac{1}{(y_1-y_2)^2} 
\end{bmatrix}
$$
and
$$
A^{-1} = \frac{1}{\vol(\sigma)}
\begin{bmatrix}
1 + y_1-y_2 & 1\\
1 & 1-y_1+y_2
\end{bmatrix}.
$$
Hence the polynomial-exponential system~(\ref{eqn:polynomial-exponential-form}) has the form
\begin{align}
\exp(y_1) &= \frac{1}{\vol(\sigma)} \left( \left(1+y_1-y_2\right)w_1 + w_2 \right) \label{eqn:1-dim-eqn1}\\
\exp(y_2) &= \frac{1}{\vol(\sigma)} \left( w_1 + \left(1-y_1+y_2\right)w_2 \right) \label{eqn:1-dim-eqn2}
\end{align}
Dividing~(\ref{eqn:1-dim-eqn1}) by~(\ref{eqn:1-dim-eqn2}) and setting $y_{12} = y_1 - y_2$, gives
\begin{equation} \label{eqn:1-dim-eqn12}
\exp(y_{12}) = \frac{ \left(1+y_{12}\right)w_1 + w_2 }{w_1 + \left(1-y_{12}\right)w_2}.
\end{equation}
In the rest of the section we will discuss how to solve Equation~(\ref{eqn:1-dim-eqn12}) using Lambert functions. The solutions for $y_1$ and $y_2$ can then be obtained from Equations~(\ref{eqn:1-dim-eqn1}) and~(\ref{eqn:1-dim-eqn2}) by solving for $y_{12}$. 

\begin{definition}[Section 2 in \cite{BarikzGeneralizations}]
For $x,t_i,s_j\in \mathbb{R}$, consider the function $$\exp(x)\frac{(x-t_1)(x-t_2)\ldots (x-t_n)}{(x-s_1)(x-s_2)\ldots (x-s_m)}.$$
We denote its (generally multi-valued) inverse function at the point $a\in\mathbb{R}$ by $$W(t_1,t_2,\ldots,t_n;s_1,s_2,\ldots,s_m;a)$$ and call it \emph{the generalized W-Lambert function}. The function $W(a):=W(0;;a)$ is called the usual \emph{W-Lambert function}.
\label{def: generalized Lambert}
\end{definition}

We have $W(;;a)=\log (a)$.

\begin{prop}\label{lemma:one_dim_one_cell}
The tent poles corresponding to a single-cell triangulation in $1$ dimension are given by:
\begin{align*} 
\begin{split}
y_1=\log(w_1 W(\rho+1;-\rho^{-1}-1;-\rho)+w_1+w_2) - \log(\vol(\sigma)), \\
y_2=\log(-w_2 W(\rho+1;-\rho^{-1}-1;-\rho)+w_1+w_2) - \log(\vol(\sigma)),
\end{split}
\end{align*}
where $\rho={w_1}/{w_2}$ and $W(\rho+1;-\rho^{-1}-1;-\rho)$ is a value of the  multi-valued generalized Lambert $W$ function if $y_1 \neq y_2$. Otherwise $y=(- \log(\vol(\sigma)),- \log(\vol(\sigma)))$.
\end{prop}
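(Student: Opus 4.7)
The plan is to use the equations \eqref{eqn:1-dim-eqn1}, \eqref{eqn:1-dim-eqn2}, and \eqref{eqn:1-dim-eqn12} that are already in hand. Once $y_{12} := y_1 - y_2$ has been determined, $y_1$ and $y_2$ follow by taking logarithms in \eqref{eqn:1-dim-eqn1} and \eqref{eqn:1-dim-eqn2}. So the central task is to show that $y_{12}$ can be expressed via the generalized Lambert $W$-function with the specific arguments in the statement.

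First I would rewrite \eqref{eqn:1-dim-eqn12} so that it matches the inverse-function definition in Definition~\ref{def: generalized Lambert}. Dividing numerator and denominator by $w_2$ and using $\rho = w_1/w_2$ gives
\begin{equation*}
\exp(y_{12}) \,=\, \frac{\rho\,y_{12}+\rho+1}{-y_{12}+\rho+1}
\,=\, -\rho\cdot\frac{y_{12}-\bigl(-\rho^{-1}-1\bigr)}{y_{12}-(\rho+1)},
\end{equation*}
which rearranges to
\begin{equation*}
\exp(y_{12})\cdot\frac{y_{12}-(\rho+1)}{y_{12}-\bigl(-\rho^{-1}-1\bigr)} \,=\, -\rho.
\end{equation*}
The left-hand side is exactly the function that the generalized Lambert $W$ function inverts with numerator root $t_1=\rho+1$ and denominator root $s_1=-\rho^{-1}-1$. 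Applying Definition~\ref{def: generalized Lambert} therefore gives $y_{12}=W(\rho+1;-\rho^{-1}-1;-\rho)$ as one of the branches of the multi-valued inverse.

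Next, I would substitute this value of $y_{12}$ back into \eqref{eqn:1-dim-eqn1} and \eqref{eqn:1-dim-eqn2}. For \eqref{eqn:1-dim-eqn1} I get $\exp(y_1)=\vol(\sigma)^{-1}\bigl(w_1\,y_{12}+w_1+w_2\bigr)$, and taking logarithms yields the stated closed form for $y_1$; an identical computation using the symmetric equation \eqref{eqn:1-dim-eqn2}, where the coefficient of $y_{12}$ is $-w_2$ rather than $w_1$, produces the formula for $y_2$. The positivity of the arguments of the outer logarithms is automatic when the corresponding exponentials are real and positive, so no additional case analysis is needed inside this branch.

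Finally, I would dispose of the degenerate case $y_1=y_2$ separately. If $y_{12}=0$, equation \eqref{eqn:1-dim-eqn12} is satisfied identically (both sides equal $1$ using $w_1+w_2=1$), and either \eqref{eqn:1-dim-eqn1} or \eqref{eqn:1-dim-eqn2} reduces to $\exp(y_1)=\vol(\sigma)^{-1}(w_1+w_2)=\vol(\sigma)^{-1}$, giving the claimed $y=(-\log\vol(\sigma),-\log\vol(\sigma))$. The main obstacle is purely bookkeeping: matching signs and the placement of numerator versus denominator roots so that the resulting parameters $(t_1;s_1;a)$ agree exactly with those appearing in the proposition, and tracking the multi-valued nature of $W$ so that the correct branch, namely the one corresponding to a genuine optimum of the log-concave MLE, is singled out.
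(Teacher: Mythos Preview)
Your proposal is correct and follows essentially the same route as the paper: rewrite \eqref{eqn:1-dim-eqn12} in terms of $\rho=w_1/w_2$, recognize the resulting expression as the function inverted by $W(\rho+1;-\rho^{-1}-1;-\rho)$, and back-substitute into \eqref{eqn:1-dim-eqn1}--\eqref{eqn:1-dim-eqn2}. You actually provide more detail than the paper on the back-substitution and on the degenerate case $y_1=y_2$, which the paper's proof leaves implicit.
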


\begin{proof}
Recall from Equation~(\ref{eqn:1-dim-eqn12}):
\begin{equation*}
    \exp(y_{12})=\frac{w_1 y_{12} +w_1 +w_2}{-w_2 y_{12} +w_1 +w_2}
\end{equation*}
\noindent or, by setting $\rho = w_1/w_2$, equivalently 
\begin{equation*}
\frac{y_{12}-\rho-1}{y_{12} +\rho^{-1} + 1}\exp(y_{12}) = -\rho.
\end{equation*}

\noindent Seen as an equation in $y_{12}$ this has solutions given by the generalized Lambert function $W(\rho+1;-\rho^{-1}-1;-\rho)$. The solutions for $y_1$ and $y_2$ can then be obtained from~(\ref{eqn:1-dim-eqn1}) and~(\ref{eqn:1-dim-eqn2}) by solving $y_{12}$.
\end{proof}

\begin{remark}
Proposition~\ref{lemma:one_dim_one_cell} generalizes to the case when we have $n$ points on a line and the optimal tent function is supported on one cell.
\end{remark}

\begin{figure}
\centering
\includegraphics[scale=0.5]{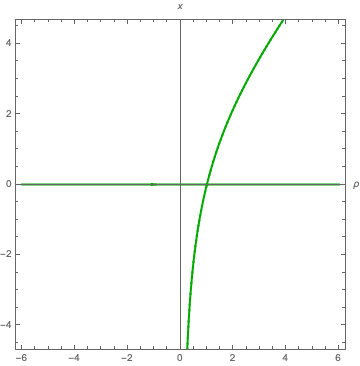}
\caption{Generalized Lambert function $W(\rho+1;-\rho^{-1}-1;-\rho)$.}
  \label{fig:implicitplotMaple}
\end{figure}

The generally multi-valued generalized $W$-Lambert function $W(\rho+1;-\rho^{-1}-1;-\rho)$ is plotted in Figure~\ref{fig:implicitplotMaple}.
We explore its branches, i.e.,~single-valued functions of $\rho$, using $r$-Lambert functions. 

\begin{definition}[Section 3.2 in~\cite{BarikzGeneralizations}]
If $r\in\mathbb{R},$ consider the function $$x\exp(x) +rx.$$
We denote its inverse function in the point $a\in\mathbb{R}$ by $W_r(a)$ and call it the $r$\emph{-Lambert function}.
\end{definition}

The following theorem makes the connection between the generalized Lambert function and the $r$-Lambert function:
\begin{theorem}[Theorem 3 in \cite{BarikzGeneralizations}]\label{Th:rLambert}
If $t,s,a\in\mathbb{R}$, the following equality holds:
$$W(t;s;a)=t+W_{-a\exp(-t)}\big (a\exp(-t)(t-s)\big ).$$
\end{theorem}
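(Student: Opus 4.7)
The plan is to reduce the defining equation of $W(t;s;a)$ directly to the defining equation of an $r$-Lambert function by a translation of variable. Let $y = W(t;s;a)$, so by Definition~\ref{def: generalized Lambert} $y$ satisfies
$$\exp(y)\cdot \frac{y-t}{y-s} \;=\; a.$$
The natural move is the substitution $z := y - t$, which turns the numerator $y-t$ into $z$ and rewrites $y-s$ as $z + (t-s)$. Clearing denominators produces $z\exp(z+t) = a(z+t-s)$.

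Next I would multiply both sides by $\exp(-t)$ and move the term linear in $z$ to the left, obtaining
$$z\exp(z) + \bigl(-a\exp(-t)\bigr)\,z \;=\; a\exp(-t)\,(t-s).$$
This is exactly the defining equation $u\exp(u) + ru = \alpha$ of the $r$-Lambert function with $r = -a\exp(-t)$ and argument $\alpha = a\exp(-t)(t-s)$. Hence $z = W_{-a\exp(-t)}\!\bigl(a\exp(-t)(t-s)\bigr)$, and substituting back $y = z + t$ yields the claimed identity.

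The only subtlety, and the step I would be most careful about, is multi-valuedness: both $W(t;s;a)$ and $W_r$ are in general multi-valued, so the identity must be read as asserting that the affine bijection $y \mapsto z = y - t$ matches the solution set of $\exp(y)(y-t)/(y-s)=a$ bijectively with the solution set of $z\exp(z) + rz = a\exp(-t)(t-s)$. Because the substitution is a reversible algebraic manipulation with no division by expressions that could vanish on solutions (one can check that $y = s$ is never a solution of the original equation for finite $a$), each branch on one side corresponds to a unique branch on the other, and no analytic argument about branch cuts is required. I would include a brief remark to this effect, but otherwise the proof is this one-line substitution.
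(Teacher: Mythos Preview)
Your proof is correct: the substitution $z=y-t$ reduces the defining equation of $W(t;s;a)$ to that of the $r$-Lambert function with $r=-a\exp(-t)$, exactly as you wrote, and your remark on multi-valuedness correctly identifies the only point needing care. The paper itself does not prove this theorem; it merely quotes it from \cite{BarikzGeneralizations}, so there is no proof in the paper to compare against, but your argument is the natural one and would be the expected proof of the cited result.
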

Hence
$$W(\rho+1;-\rho^{-1}-1;-\rho)=\rho+1+W_{\rho\exp(-\rho-1)}\big (-\rho\exp(-\rho-1)(\rho+\rho^{-1}+2)\big ).$$
The number of branches of the $r$-Lambert function is classified in~\cite[Theorem 4]{BarikzGeneralizations} and~\cite[Theorem 4]{Maignan}. For $r=\rho\exp(-\rho-1)$, it translates to
\begin{enumerate}
    \item two branches, if $\rho\exp(-\rho-1)< 0$; 
    \item three branches, if $0<\rho\exp(-\rho-1)<\exp(-2)$;
    \item one branch, if $\rho\exp(-\rho-1)\ge \exp(-2)$.
\end{enumerate}
The second case happens when $\rho > 0$, in which case we have the double branch of constant zero function and an additional branch. 
This is the branch that is relevant to us in the context of Proposition~\ref{lemma:one_dim_one_cell}.  The first case happens when $\rho < 0$, in which case there exists a double branch of the constant zero function. 
This cannot appear for positive weights $w_i$.
The third case does not happen.

The $r$-Lambert function can be computed with the \texttt{C++} implementation~\cite{Mezo2017}. Alternatively, one can use results about computing roots of polynomial-exponential equations.
In \cite{Maignan:1998}, a symbolic-numeric algorithm is proposed for constructing explicitly an interval containing all the real roots of a single real polynomial-exponential equation, and counting how many roots are contained in a non-bounded interval. 
In \cite{MR703463}, the decision problem of the existence of positive roots of such functions is discussed. This subject is strongly related to quantifier elimination~\cite{MR3283842}, and to transcendentality problems~\cite{MR2854845,MR3776771,MR3577161}. The latter problem of the transcendence theory appears in our Theorem \ref{th:transcend}. 

\subsection{Two cells in one dimension} \label{section:three-points-on-a-line}

Let $X=(x_1,x_2,x_3) \subset \mathbb{R}$. Then
$$
A = \begin{bmatrix}
\frac{v_1}{(y_1-y_2)^2}-\frac{v_1}{y_1-y_2} & -\frac{v_1}{(y_1-y_2)^2} & 0\\
-\frac{v_1}{(y_1-y_2)^2} & \frac{v_1}{(y_1-y_2)^2} -\frac{v_1}{y_1-y_2} + \frac{v_2}{(y_2-y_3)^2} - \frac{v_2}{y_2-y_3} & - \frac{v_2}{(y_2-y_3)^2}\\
0 & - \frac{v_2}{(y_2-y_3)^2} & \frac{v_2}{(y_2-y_3)^2} - \frac{v_2}{y_2-y_3}
\end{bmatrix}.
$$
Recall $y_{12}=y_1-y_2$ and $y_{23}=y_2-y_3$. Then 
\begin{scriptsize}
$$
A^{-1}= \frac{1}{v_1(1+y_{23}) + v_2(1-y_{12})} \begin{bmatrix}
-(1+y_{12})(1+y_{23}) + \frac{v_2}{v_1}y_{12}^2& -1-y_{23} & -1\\
-1-y_{23} & (-1+y_{12})(1+y_{23}) & -1+y_{12}\\
-1 &  -1+y_{12} & -(-1+y_{12})(-1+y_{23})+\frac{v_1}{v_2} y_{23}^2
\end{bmatrix}.
$$
\end{scriptsize}

Consider the polynomial-exponential system $\exp(y)=A^{-1} w$ as in~(\ref{eqn:polynomial-exponential-form}). Dividing the first equality with the second one and the second one with the third one gives:
\begin{equation}\label{eq:2eq}
\begin{dcases}
    \exp(y_{12})  = \frac{(-(1+y_{12})(1+y_{23}) + \frac{v_2}{v_1}y_{12}^2) w_1 +(-1-y_{23}) w_2 - w_3}{(-1-y_{23}) w_1 +(-1+y_{12})(1+y_{23}) w_2 +(-1+y_{12}) w_3},\\[2ex]
     \exp(y_{23})  = \frac{(-1-y_{23}) w_1 +(-1+y_{12})(1+y_{23}) w_2 +(-1+y_{12}) w_3}{-w_1 +(y_{12}-1) w_2 - ((y_{12}-1)(y_{23}-1)+\frac{v_1}{v_2} y_{23}^2) w_3}.
\end{dcases}
\end{equation}

Hence we could reduce a polynomial-exponential system with three equations and three variables to a polynomial-exponential system with two equations and two variables. Systems of two rational bivariate polynomial-exponential equations such as~(\ref{eq:2eq}) are studied in~\cite{Maignan:1998}. An algorithm giving the number of solutions of such a system is provided, where all the solutions are contained in a generalized open rectangle of type $I_1\times I_2\subset \mathbb{R}^2$, under the hypothesis that at least one of the intervals $I_1$ or $I_2$ is bounded.

\begin{remark}
Let $X \subset \mathbb{R}$. If we consider tent functions $h_{X,y}$ that are supported on two cells such that $h_{X,y}$ is a constant function on one of the two cells, then one can use methods similar to the one cell case (see Section~\ref{section:one-dimension-one-cell}) to give the optimal solution using the Lambert function.
\end{remark}

\section{Certifying solutions with Smale's \texorpdfstring{$\alpha$}{alpha}-theory}\label{section:Smale-alpha-certify}

As explained in Section~\ref{sect:LogConcMLE}, our task is to maximize the objective function $S(y_1,\dots,y_n)$ defined in Corollary \ref{cor:optsecondary}. For a subdivision~$\Delta$, we can find the optimal $y^*_{\Delta}$ by considering $S_{\Delta'}(y_1,\dots,y_n)$ for any maximal triangulation $\Delta'$ that refines $\Delta$, substituting $y_i$ that can be expressed in terms of other $y$'s for the subdivision $\Delta$ and solving the system of critical equations $\partial \widetilde{S}_{\Delta}/\partial y_i = 0$ for the resulting function $\widetilde{S}_{\Delta}$. For maximal triangulations, we have $\widetilde{S}_{\Delta}=S_{\Delta}$ and the system of critical equations is given by~(\ref{critical-equations}). We will write $S_{\Delta}$ instead of $\widetilde{S}_{\Delta}$ also when talking about general subdivisions and for brevity we denote the system of critical equations by $\nabla S_{\Delta}(y) = 0$.   We say the system is square because we have $n$ equations $\partial S_{\Delta}/\partial y_i = 0$ in $n$ variables $y_1,\dots,y_n$. Usually it will be impossible to write down exact solutions to these systems, but there is a way forward. In what follows we discuss the computation of certified solutions to this system of equations. To do so, we discuss Smale's $\alpha$-theory, which makes mathematically rigorous the idea of approximate zeros in the sense of quadratic convergence of Newton iterations. The following influential definition was given in \cite{BCSS1998, S1986}.

\begin{definition}[Chapter 8 of \cite{BCSS1998}]\label{def:approximate-zero}
Let $Df(x)$ be the $n \times n$ Jacobian matrix of the square system of complex-analytic equations $f(x)=0 \in \mathbb{C}^n$, where $f:\mathbb{C}^n \to \mathbb{C}^n$ is written as a column vector of its component functions
$$f(x) = [f_1(x_1,\dots,x_n), \dots, f_n(x_1,\dots,x_n)]^T .$$
A point $z \in \mathbb{C}^n$ 
is an {\em approximate zero} of $f\,$ if there exists a zero $z^* \in \mathbb{C}^n$ of~$f$ such that the sequence of  Newton iterates
$$z_{k+1} = z_k - Df(z_k)^{-1} f(z_k)
$$
satisfies
$$ \Vert z_{k+1} - z^* \Vert \,\leq \, \frac{1}{2}\Vert z_k- z^* \Vert^2$$
for all $k \geq 1$ where $z_0 = z$. If this holds, then we call $z^*$ the {\em associated zero} of~$z$.
Here~${\Vert x\Vert := (\sum_{i=1}^n x_i\overline{x_i})^\frac{1}{2}}$
is the standard norm in~$\mathbb{C}^n$, and the zero $z^*$ is assumed to be
nonsingular, meaning that ${\rm det}Df( z^* ) \neq 0 $.
\end{definition}

Therefore the problem becomes two-fold. Given a system of equations $f$, we need a way to (1) generate approximate solutions, and (2) certify their quadratic convergence under Newton iterations. The methods of Smale's $\alpha$-theory solve exactly this second problem. 
This is accomplished using the constants $\alpha(f,x), \beta(f,x)$ and $\gamma(f,x)$, which we will discuss in Section~\ref{sec:smale-alpha-theory}. Typically $\gamma$ is difficult to compute, since it is defined as the supremum of infinitely many quantities depending on higher-order derivatives of our system of equations. However, explicit upper bounds on $\gamma$ were calculated in \cite{HL2017} which we can specialize to the system required for log-concave density estimation. These upper bounds have the advantage that they are easily computed from our system $\nabla S = 0$, and can therefore be used to $\alpha$-certify approximate solutions coming from numerical software. In Section \ref{sec:smale-alpha-theory}, we make this precise, discussing recent work on the subject~\cite{HL2017, HS2012, SS1993, S1986} and how it applies in our context.

\begin{remark} \label{example:evaluating-numerical-solutions}
One might wonder why we do not directly evaluate the equations in question
to the approximate height values given by statistical packages. 
The reason is that we want to have a measure of how accurate this solution is,
which is also very sensitive to the system.
Consider for example the system consisting of the single polynomial $f(x) = x$. 
We would not accept $1/2$ as a solution.
But if we consider the system $f(x) = x^{10}$ and we evaluate at $x=1/2$,
we get a value that is less than $0.001$.
This could have been tempting, but note that in both cases the difference between actual solution and approximation is the same.

Another example that illustrates the
potential difficulties involved in judging a numerical solution based on its evaluation into the original system of equations comes from~\cite{BatesHauensteinSommeseWampler2013BertiniTEXT}. Consider the univariate polynomial
\begin{equation*}
    f(z) = z^{10} - 30z^9 + 2.
\end{equation*}
A solution which is accurate within $9.4 \times 10^{-12}$ of the true solution is
\begin{equation*}
    z^* = 30.00000000000142 - 0.00000000000047i,
\end{equation*}
but evaluating the polynomial at this solution yields a complex number $f(z^*)$ with norm $|f(z^*)| = 31.371$, which certainly seems far from zero. However, refining the accuracy of this solution to
\begin{equation*}
    z^{**} = 29.9999999999998983894731343124 + 0.0000000000000000000000062i,
\end{equation*}
we find that $|f(z^{**})| = 0.00000000032$, which is much better.
\end{remark}

\subsection{Smale's \texorpdfstring{$\alpha$}{alpha}-theory}\label{sec:smale-alpha-theory}

The intuition behind $\alpha$-theory is as follows. The size of the initial Newton iteration step combined with the size of the derivatives control how quickly Newton iteration converges to a true solution. We can calculate the size of the Newton iteration step, so if we have some control over the higher order derivatives of $f$, then we should be able to certify whether a solution satisfies the criterion of Definition \ref{def:approximate-zero}. This motivates the definition of the following constants $\alpha, \beta, \gamma \in \mathbb{R}$, associated to a system of equations $f$ at a point $x$. These constants measure quantities relevant to certifying approximate zeros.

\begin{definition}\label{def:alpha-beta-gamma}
Let $f:\mathbb{C}^n \to \mathbb{C}^n$ be a system of complex-analytic functions and let $x \in \mathbb{C}^n$. We define $\alpha(f,x)$ to be the product of $\beta(f,x)$ and $\gamma(f,x)$:
\begin{equation*}
    \alpha(f,x) = \beta(f,x) \gamma(f,x).
\end{equation*}
The constant $\beta(f,x)$ measures the size of the Newton iteration step applied at $x$, namely:
\begin{equation*}
    \beta(f,x) = \| Df(x)^{-1} f(x) \|,
\end{equation*}
while $\gamma(f,x)$ bounds the sizes of the following quantities, involving the higher order derivatives:
\begin{equation*}
    \gamma(f,x) = \text{sup}_{k\geq2} \left\| \frac{Df(x)^{-1} D^k f(x)}{k!} \right\|^{\frac{1}{k-1}}.
\end{equation*}
\end{definition}

If we can compute these constants $\beta, \gamma$ for a candidate solution, then we can utilize the following

\begin{theorem}[Chapter 8 of \cite{BCSS1998}]\label{theorem:alpha-theorem}
If $f:\mathbb{C}^n \to \mathbb{C}^n$ is a system of complex-analytic functions and $x \in \mathbb{C}^n$ satisfies
\begin{equation*}
    \alpha(f,x) < \frac{13-3\sqrt{17}}{4} \approx 0.157671,
\end{equation*}
then $x$ is an approximate zero of $f = 0$.
\end{theorem}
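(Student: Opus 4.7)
The plan is to exhibit an actual zero $z^* \in \mathbb{C}^n$ of $f$ as the limit of the Newton sequence $x_0 = x$, $x_{k+1} = N_f(x_k) := x_k - Df(x_k)^{-1} f(x_k)$, and then verify that $\|x_{k+1} - z^*\| \leq \tfrac{1}{2}\|x_k - z^*\|^2$ for all $k \geq 0$. The overarching idea is that $\beta(f, x_k)$ measures the length of the Newton step at $x_k$ (hence the distance from $x_k$ to $x_{k+1}$), while $\gamma(f, x_k)$ controls the second-order error of the linearization; together they predict exactly how the next $\beta$ and $\gamma$ behave.

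First I would record the analytic Taylor bounds that follow directly from Definition \ref{def:alpha-beta-gamma}. Since $f$ is analytic and $\gamma = \gamma(f,x)$ bounds the normalized higher derivatives, the series identities give uniform estimates of the form
\begin{equation*}
\|Df(x)^{-1}\bigl(f(x+v) - f(x) - Df(x) v\bigr)\| \leq \sum_{k \geq 2} (\gamma \|v\|)^{k-1} \|v\|,
\end{equation*}
together with a parallel bound on $Df(x)^{-1}(Df(x+v) - Df(x))$. These geometric series sum in closed form provided $\gamma \|v\| < 1$, and in particular show $Df(x+v)$ is invertible in a computable ball. The key technical step is then a ``one Newton step'' lemma: writing $u = \gamma \|v\|$ and introducing the auxiliary polynomial $\psi(u) = 1 - 4u + 2u^2$, one obtains bounds of the shape
\begin{equation*}
\beta(f, x+v) \leq \frac{\|v\|}{\psi(u)}\bigl(\text{quadratic terms}\bigr), \qquad \gamma(f, x+v) \leq \frac{\gamma}{(1-u)\,\psi(u)},
\end{equation*}
so that applying these with $v = -Df(x)^{-1} f(x)$ (where $\|v\| = \beta$) yields a recursion that expresses $\alpha_{k+1} = \beta(f, x_{k+1})\gamma(f, x_{k+1})$ as a rational function of $\alpha_k$ alone.

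Next, I would track this scalar recursion. The condition $\alpha < \alpha_0$ with $\alpha_0 = (13 - 3\sqrt{17})/4$ is precisely what forces the iteration $\alpha \mapsto \alpha \cdot h(\alpha)$ (for the explicit rational $h$ emerging above) to contract fast enough that each $\beta_k$ at least halves and, in fact, squares appropriately. Summing the resulting geometric-type bound $\sum_{j \geq k+1} \beta_j$ produces a limit point $z^*$ with $\|x_k - z^*\| \leq C \beta_k$; continuity plus invertibility of $Df$ along the sequence forces $f(z^*) = 0$ and $Df(z^*)$ nonsingular. Combining the contraction rate on $\beta_k$ with the bound $\|x_{k+1} - z^*\| \leq \sum_{j \geq k+1} \beta_j$ yields the required quadratic estimate $\|x_{k+1} - z^*\| \leq \tfrac{1}{2}\|x_k - z^*\|^2$.

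The main obstacle is calibrating the one-step lemma in Step 2 sharply enough that the specific threshold $\alpha_0 = (13 - 3\sqrt{17})/4$ drops out rather than a weaker constant. Concretely, this means choosing the bounding function $\psi$ optimally, and showing that $\alpha_0$ is exactly the largest root of the fixed-point polynomial governing how $(\beta_k, \gamma_k)$ propagate. Everything else - the analytic estimates, the Cauchy argument producing $z^*$, and the quadratic convergence rate - is relatively mechanical once this sharp one-step bound is in hand. This is the content of Smale's original analysis in \cite{S1986}, refined in \cite{BCSS1998}.
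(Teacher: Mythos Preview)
The paper does not prove this theorem at all: it is quoted verbatim as a known result from Chapter~8 of \cite{BCSS1998}, with no argument supplied. Your sketch is a faithful outline of the classical proof found in that reference and in Smale's original paper \cite{S1986}---the Taylor-series bounds from the definition of $\gamma$, the one-step Newton lemma governed by the polynomial $\psi(u)=1-4u+2u^2$, the scalar recursion on $\alpha_k$, and the identification of $(13-3\sqrt{17})/4$ as the relevant threshold are exactly the ingredients used there---so there is nothing in the paper to compare against beyond the citation you already invoke.
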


For polynomial systems, all higher-order derivatives eventually vanish. Exactly this fact was used in \cite{SS1993} to derive an upper bound for $\gamma(f,x)$ which involves the degrees of the polynomials in the system $f$. This is highly convenient since, even for systems of polynomials, calculating $\gamma(f,x)$ purely based on the definition is quite a difficult task. Yet, if we are to certify candidate solutions to our system of equations, we need to calculate $\gamma$ and $\beta$ at our candidate $x$, multiply them, and hope they are below $\approx 0.157671$.

\subsection{Polynomial-exponential systems}\label{subs:extend}

For polynomial-exponential systems $f$, calculating $\gamma(f,x)$ is even harder. However, in \cite{HL2017}, an upper bound was computed for $\gamma$ involving quantities more readily apparent in a given system $f$ than what appears in the bare definition of $\gamma$. In fact, an upper bound for $\gamma$ is calculated which applies to a general class of systems, as well as upper bounds for several special cases. One of these special cases can be further specialized to the system of equations $\nabla S = 0$ arising in log-concave density estimation (this is Lemma \ref{lemma:reformat-critical-equations} below). In \cite{HL2017} an example is given where the bounds for the special cases allowed candidate solutions to be $\alpha$-certified despite failure using the more general bounds. In this section we summarize the results of \cite{HL2017} as they relate to log-concave density estimation. First we need a few definitions.
\begin{definition}\label{def:one-norm-on-Cn}
For a point $x \in \mathbb{C}^n$ define
\begin{equation*}
    \|x\|^2_1 = 1 + \|x\|^2 = 1 + \sum_{i=1}^n |x_i|^2.
\end{equation*}
For a polynomial $g:\mathbb{C}^n \to \mathbb{C}$ given as $g(x) = \sum_{|\rho| \leq d} a_{\rho} x^{\rho}$ define
\[
\|g\|^2 = \frac{1}{d!} \sum_{|\rho|\leq d} \rho!\cdot(d - |\rho|)!\cdot|a_\rho|^2.
\]
For a polynomial system $f: \mathbb{C}^n \to \mathbb{C}^n$
with $f(x) = [f_1(x),\dots,f_n(x)]^T$, we define
\[
\|f\|^2 = \sum_{i=1}^n \|f_i\|^2.
\]
\end{definition}

We now define a quantity $\mu(f,x)$ associated to a polynomial system which will play a role in bounding $\gamma$ later.
\begin{definition}\label{def:mu-for-polynomial-system}
Let $f:\mathbb{C}^n \to \mathbb{C}^n$ be a polynomial system with $\text{deg } f_i = d_i$. Define
\begin{equation*}
    \mu(f,x) = \text{max } \left\{ 1, \|f\| \cdot \|Df(x)^{-1} C_{f}(x) \| \right\}
\end{equation*}
where $C_f(x)$ is the diagonal matrix
\begin{equation*}
C_{f}(x) = \left[\begin{array}{ccc} d_1^{1/2}\cdot\|x\|_1^{d_1-1} & & \\ & \ddots & \\ & & d_n^{1/2}\cdot\|x\|_1^{d_n-1} \end{array}\right].
\end{equation*}
\end{definition}
Following \cite{HL2017}, we extend Definition \ref{def:mu-for-polynomial-system} to certain polynomial-exponential systems.
\begin{definition}\label{def:mu-for-poly-exponential-system}
Let $a \in \mathbb{Z}_{\geq0}$,
$\delta_i \in \mathbb{C}$,
and $\sigma_i \in \{1,\dots,n\}$. Consider the polynomial-exponential system
\begin{equation}
G(x_1,\dots,x_n,u_1,\dots,u_a ) = \left[\begin{array}{c}
P(x_1,\dots,x_n,u_1,\dots,u_a) \\
\begin{array}{cc}
u_1 - \exp(\delta_1 x_{\sigma_1}) \\
u_2 - \exp(\delta_2 x_{\sigma_2}) \\
\vdots \\ 
u_a - \exp(\delta_a x_{\sigma_a})
\end{array}
\end{array}\right], \label{Eq:ReductionG}
\end{equation}
where $P:\mathbb{C}^N \to \mathbb{C}^n$ is a polynomial system with $N = n+a$ variables. Thus, the system $G$ is a square system of size $N$.  We write $X:=(x,u)$. Define
\begin{equation*}
    \mu(G,X) = \max\left\{1, \left\|DG(x,u)^{-1} \left[\begin{array}{cc} C_{P}(x,u)\|P\| & \\ & I_a \end{array}\right]\right\|\right\}.
\end{equation*}
\end{definition}

The following specializes Corollary 2.6 of \cite{HL2017}. \begin{theorem}\label{theorem:specialized-gamma-bounds}
Let $a \in \mathbb{Z}_{\geq0}$,
$\delta_i \in \mathbb{C}$,
and $\sigma_i \in \{1,\dots,n\}$ and consider the polynomial-exponential system~(\ref{Eq:ReductionG}).
  Let $d_i = \text{deg } P_i$ and $D = \text{max } d_i$. For any $\lambda, \theta \in \mathbb{C}$ define
\begin{equation*}
    A(\lambda,\theta) = \text{max} \left\{ |\lambda|, \left| \frac{\lambda^2 \exp(\lambda \theta)}{2} \right| \right\}.
\end{equation*}
Then, for any $X = (x,u) \in \mathbb{C}^N$ such that the Jacobian of $G$ is invertible,
\begin{equation}\label{equation:gamma-bounded-by}
    \gamma(G,X) \leq \mu(G,X) \left( \frac{D^{3/2}}{2\|X\|_1} + \sum_{i=1}^a A(\delta_i, x_{\sigma_i}) \right).
\end{equation}
\end{theorem}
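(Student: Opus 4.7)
The plan is to deduce this statement as a specialization of [HL2017, Corollary 2.6], verifying that our system $G$ of (\ref{Eq:ReductionG}) falls into the class of polynomial-exponential systems treated there and then evaluating the resulting bound in our specific setting.

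The proof naturally splits along the block decomposition of $G$: the first $n$ rows are the polynomials $P_1,\dots,P_n$, and the last $a$ rows each have the form $u_i - \exp(\delta_i x_{\sigma_i})$. For the polynomial block, the standard Shub--Smale estimate for $\gamma$, as generalized by Hauenstein--Levandovskyy using the conditioning matrix $C_P$ and the quantity $\mu$, bounds the contribution to $\gamma(G,X)$ by $\mu(G,X)\cdot D^{3/2}/(2\|X\|_1)$. For each exponential row, the $k$-th derivative tensor has a single nonzero entry, namely the $k$-fold partial derivative in $x_{\sigma_i}$, which equals $-\delta_i^k \exp(\delta_i x_{\sigma_i})$; by Definition~\ref{def:mu-for-poly-exponential-system}, the image of the corresponding standard basis vector under $DG(X)^{-1}$ is absorbed into $\mu(G,X)$, leaving the scalar residual $|\delta_i|^k|\exp(\delta_i x_{\sigma_i})|/k!$.

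The numerical core of the argument is to verify that for each $i$ and all $k\geq 2$,
\[
c_k := \left(\frac{|\delta_i|^{k}\,|\exp(\delta_i x_{\sigma_i})|}{k!}\right)^{\frac{1}{k-1}} \leq A(\delta_i, x_{\sigma_i}).
\]
One computes $c_2 = |\delta_i^2 \exp(\delta_i x_{\sigma_i})/2|$ and $\lim_{k\to\infty} c_k = |\delta_i|$, and then a Stirling-type lower bound on $k!$ shows that $c_k$ never exceeds the maximum of these two values. Hence the exponential block contributes at most $\mu(G,X)\sum_{i=1}^a A(\delta_i,x_{\sigma_i})$ to $\gamma(G,X)$.

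The main obstacle is interchanging the supremum over $k$ with the sum over the polynomial and exponential blocks, since $\gamma$ is defined as a supremum of $(k-1)$-th roots of quantities that are themselves norms of sums. I plan to handle this by writing $DG(X)^{-1}D^kG(X)/k!$ as a sum of pieces indexed by the rows of $G$, each supported in a different block of the codomain, bounding the operator norm of each piece by the corresponding $\mu$-weighted scalar, and only then applying the triangle inequality and extracting the $(k-1)$-th root. Taking the supremum of each resulting term in $k$ independently delivers exactly the polynomial-plus-sum decomposition on the right-hand side of (\ref{equation:gamma-bounded-by}).
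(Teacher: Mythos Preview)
Your proposal is correct and takes essentially the same approach as the paper: both deduce the bound by specializing Corollary~2.6 of \cite{HL2017} to the system~(\ref{Eq:ReductionG}). The paper's proof is in fact more terse than yours---it simply states that one applies that corollary and sets to zero the quantities attached to the function types (trigonometric, etc.) not appearing in~(\ref{Eq:ReductionG}), without unpacking the block-by-block contribution or the $c_k$ estimate; the detailed verification you sketch is already contained in \cite{HL2017} and need not be reproduced.
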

\begin{proof}
This is a straight-forward specialization of Corollary 2.6 of \cite{HL2017}. We set to zero quantities that deal with functions not relevant to log-concave density estimation.
\end{proof}

Therefore, reformulating our system of polynomial-exponential equations $\nabla S_{\Delta} = 0$ in the format~(\ref{Eq:ReductionG}) will allow us to calculate an upper bound on $\gamma$, which will allow us to certify solutions to our critical equations.

\begin{lemma}\label{lemma:reformat-critical-equations}
Fix a maximal regular triangulation $\Delta$. The polynomial-exponential system $ \nabla S_{\Delta} = 0 $
 can be reformulated as a system of equations of the form~(\ref{Eq:ReductionG}), demonstrating that Theorem \ref{theorem:specialized-gamma-bounds} applies in the context of log-concave maximum likelihood estimation.
\end{lemma}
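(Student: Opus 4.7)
The plan is to massage the square system $\nabla S = 0$ into the shape required by (\ref{Eq:ReductionG}) by introducing one auxiliary variable per exponential appearing in the critical equations. Inspecting (\ref{critical-equations}), the only exponentials that occur are $\exp(y_j)$ for $j = 1, \ldots, n$ (no mixed exponentials such as $\exp(y_j - y_\alpha)$ appear, since the differences $y_j - y_\alpha$ show up only in denominators). Hence $a = n$ auxiliary variables will suffice, and the data in (\ref{Eq:ReductionG}) can be chosen as $\delta_j = 1$ and $\sigma_j = j$.

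First I would invoke Corollary \ref{conjecture:polynomial-exponential-form}, which, using the invertibility of $A$ established in Theorem \ref{theorem: invertibility of A}, rewrites the critical equations as
$$\exp(y_i) = p_i(y_1, \ldots, y_n), \qquad i = 1, \ldots, n,$$
with each $p_i \in \mathbb{R}(y_1, \ldots, y_n)$. Writing $p_i = f_i/g_i$ with $f_i, g_i \in \mathbb{R}[y_1, \ldots, y_n]$ coprime and clearing denominators gives the equivalent system (away from the locus $\prod_i g_i = 0$, which is harmless at any critical point with pairwise distinct heights on each simplex of $\Delta$)
$$g_i(y)\exp(y_i) - f_i(y) = 0, \qquad i = 1, \ldots, n,$$
which is polynomial in $y$ except for the $n$ scalar factors $\exp(y_i)$.

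Next I would introduce auxiliary variables $u_i := \exp(y_i)$ and define the polynomial map $P : \mathbb{C}^{2n} \to \mathbb{C}^n$ componentwise by $P_i(y,u) := g_i(y)\,u_i - f_i(y)$. With $a = n$, $N = 2n$, $\delta_i = 1$ and $\sigma_i = i$, the combined square system of size $N$
$$G(y, u) \;=\; \begin{bmatrix} P(y,u) \\ u_1 - \exp(\delta_1 y_{\sigma_1}) \\ \vdots \\ u_n - \exp(\delta_n y_{\sigma_n}) \end{bmatrix} \;=\; 0$$
is exactly of the form (\ref{Eq:ReductionG}). By construction, $(y^*, \exp(y^*))$ is a zero of $G$ if and only if $y^*$ is a zero of $\nabla S$, so Theorem \ref{theorem:specialized-gamma-bounds} yields the desired upper bound on $\gamma(G, \cdot)$ for the log-concave MLE problem.

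The main (minor) obstacle is bookkeeping: extracting the polynomials $f_i$ and $g_i$ explicitly from the entries of $A^{-1}$ in (\ref{equation:score_equations_matrix_form}) (for instance via the cofactor expansion, which is legitimate since Theorem \ref{theorem: invertibility of A} guarantees $\det A \not\equiv 0$), and verifying that no other exponential expressions creep in during the manipulation. Both tasks are immediate from the structure of (\ref{critical-equations}): the exponential $\exp(y_j)$ appears only as a multiplicative factor in the row-$j$ critical equation, and every other dependence on $y$ is through rational functions.
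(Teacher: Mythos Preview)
Your proof is correct and arrives at the same reformulation as the paper, but it takes an unnecessary detour. The paper works directly from the critical equations (\ref{critical-equations}) in their original form $Ae^y = w$: each $\partial S/\partial y_j$ is visibly a rational function of the $y_i$ and the $\exp(y_i)$, so one clears denominators and substitutes $u_i$ for $\exp(y_i)$ without ever inverting $A$. You instead pass through Corollary~\ref{conjecture:polynomial-exponential-form} and Theorem~\ref{theorem: invertibility of A} to reach the inverted form $e^y = A^{-1}w$ before clearing denominators. This is valid but superfluous for the lemma at hand, since the format~(\ref{Eq:ReductionG}) does not require each $P_i$ to involve only the single variable $u_i$.

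One incidental benefit of your route is that the resulting polynomials $P_i(y,u) = g_i(y)u_i - f_i(y)$ are each linear in $u$ and involve only one auxiliary variable, which could in principle yield a smaller degree $D$ in the bound of Theorem~\ref{theorem:specialized-gamma-bounds}. A minor cost is that your argument, as written, leans on Corollary~\ref{conjecture:polynomial-exponential-form}, which is stated only for maximal triangulations; the paper's direct argument applies verbatim to the reduced objective $\widetilde{S}$ associated to a general subdivision, since its partial derivatives are still rational in the surviving $y_i$ and $\exp(y_i)$.
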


\begin{proof}
The partial derivatives $\partial S_{\Delta}/\partial y_k$ are rational functions of the $y_i$ and the $\exp(y_i)$. Since we set each partial derivative to zero, we can clear denominators, creating a system of equations, each of which is a polynomial in the $y_i$ and the $\exp(y_i)$. Setting each $\delta_i = 1$ in~(\ref{Eq:ReductionG}), we can replace each occurrence of $\exp(y_i)$ with $u_i$, creating the polynomial system $P(y_1,\dots,y_n,u_1,\dots,u_n)$, hence $a = n$ as well. Appending the equations $u_i - \exp(y_i)$ to the system of polynomials $P$, we have a system of $2n$ equations in $2n$ unknowns. This system is of the required form in order to apply Theorem \ref{theorem:specialized-gamma-bounds}.
\end{proof}

Thus, we have everything we need to compute the upper bound in~(\ref{equation:gamma-bounded-by}) for a system of critical equations $\nabla S_{\Delta} = 0$ when $\Delta$ is a maximal regular triangulation. By calculating this upper bound for a given system of equations, we can certify approximate numerical solutions obtained in any way. 
When $\Delta$ is not a maximal regular triangulation, one must impose further linear constraints on some of the $y_i$,
as was the case in Example \ref{ex:2-5-7-critical-equations}.
After simplifications, 
one might still end up with terms involving exponentials of fractional convex combinations of the $y_i$.
This poses no threat for the purposes of $\alpha$-certification, 
as one may in fact use products of exponentials of the form $e^{\beta y_i}$.
In particular, a bound for $\gamma(G,X)$ also for these more general polynomial-exponential systems is given in~\cite[Corollary 2.6]{HL2017}.

In algebraic statistics, it is common to find algebraic invariants which characterize algebraic complexity. For example, the maximum likelihood degree of a statistical model gives information about the critical points of the likelihood function of a parametric model \cite{amendola2015maximum}.  Similarly, in nonparametric algebraic statistics, it could be the case that the combinatorial complexity of the optimal subdivision gives us information about the computational complexity of finding a numerical solution. 
\begin{question}
Does increasing the combinatorial complexity of the optimal subdivision decrease the likelihood that the numerical output from \texttt{LogConcDEAD} is $\alpha$-certified?
\end{question}

We study this question experimentally in the next section. In future work, one could hope to precisely describe this phenomenon, should it exist. Of course, higher degrees, more variables, more equations will always increase the bound on $\gamma$ we calculate, but the combinatorics should still play some role.

\subsection{A procedure for \texorpdfstring{$\alpha$}{alpha}-certifying}

One of our motivating questions was to determine the correct subdivision for a given data set, as was the case in Example \ref{motivatingexample}. In this section we describe a procedure based on Smale's $\alpha$-theory that in principle allows us to find the certifiably correct subdivision. Recall that the objective function $S(y_1,\dots,y_n)$ depends on a subdivision of the convex hull of the data set $X$. If there are $m$ subdivisions, then there are $m$ different objective functions $S_1,\dots, S_m$, and $m$ different possible systems of equations $\nabla S_1 = 0, \dots, \nabla S_m = 0$. Given an estimate of a solution $y^*$, perhaps computed numerically using existing software, we can attempt to $\alpha$-certify that solution using any of these systems as input to Lemma \ref{lemma:reformat-critical-equations} and Theorem \ref{theorem:specialized-gamma-bounds}. As we collect $\alpha$-certified critical points for the various objective functions, we can use this data to determine the correct subdivision, helping to answer our motivating question. 

In practice, we have found that numerically computed solutions $y^*$ are often not $\alpha$-certified, using any of the systems $\nabla S_i = 0$. However, using a brute-force search over all possible additional digits, we often can find one system $\nabla S_j = 0$ to which $y^* + \varepsilon$ is an $\alpha$-certified solution. Here, $\varepsilon = (\varepsilon_1,\dots, \varepsilon_n)$ is a vector providing additional digits of precision to each component of $y^*$. As we compute $\alpha$-values for each $y^* + \varepsilon$, we move in the direction which causes a decrease in the computed $\alpha$-value, until we are able to find an $\alpha$-certified $y^* + \varepsilon$. We describe this in the following

\vspace{3pt}
\begin{algorithm}[H]
    \DontPrintSemicolon
    \KwIn{A system $\nabla S_i = 0$ coming from the $i$th candidate subdivision and a candidate approximate solution $y^* = (y_1, \dots, y_n)$.}
    \KwResult{A refinement of the heights $y^* + \varepsilon$ along with alpha certification of the system, or inability to certify.}
    
    Let $p$ be the number of trusted significant digits (in binary) of the approximate solution $y^*$. \;
    Expressing $y^*$ in binary, compute the $\alpha$-value for all $3^n$ points $y_i + \epsilon_i 2^{-p}, \epsilon_i \in \{-1,0,1\}$. Keep the point with the lowest alpha value, and set this as the new  $y_i$.\;
    If the alpha value is below $0.157671$ stop and return the solution. If it has decreased between steps or remained the same, increase $p$ by $1$ and go to step $2$. If there is no improvement for several loops in a row, stop and declare inability to certify the system. \;
    \caption{Testing certifiability by digit refinement}
    \label{algorithm:refinement}
\end{algorithm}
\vspace{12pt}

\begin{remark}
Here we collect a few comments on Algorithm \ref{algorithm:refinement}.
\begin{enumerate}
    \item We note that this brute-force search over all possible digits could be replaced by any numerical procedure for finding solutions to a given set of equations, see for instance the \emph{refine} command in the Numerical Algebraic Geometry package for Macaulay2   \cite{MR2881262}. For example, Newton iteration could be used on the system of equations to produce more accurate solutions, which could then be $\alpha$-certified. However, to compare the outputs of \texttt{LogConcDEAD} for problems of increasing combinatorial complexity (see Table \ref{tab:bindig}), we wanted to use a completely ``blind'' brute-force search as described above.
    \item One does not need to stop at Step $3$ once a solution is certified. 
    Repeating the loop allows increasing the precision of the solution by moving to lower $\alpha$ values. This is in contrast to statistical software like \texttt{LogConcDEAD} which only allows up to $7$ significant digits.
    \item Although precision can be added, our (first) goal with Algorithm \ref{algorithm:refinement} is to find the correct subdivision induced by the heights. One can test several subdivisions here, therefore we say that we test the (approximate) solution against the corresponding system of equations. 
    \item It might happen that the $\alpha$-value does not immediately decrease from one loop to the next even if we have the correct system of equations. One reason is that if the next significant digit is a zero for all heights, we are computing an $\alpha$-value for the same point multiple times. 
    \item In step 1 of the above algorithm, we let $p$ be the number of trusted significant digits of the approximate solution $y^*$. We have found that several of the last digits of a solution computed with \texttt{LogConcDEAD} were incorrect, in the sense that if we start our search (in Algorithm \ref{algorithm:refinement}) earlier in the significant digits of $y^*$ we are able to $\alpha$-certify some $y^* + \varepsilon$. In this way, we can correct for some of the imprecision of a numerical solver.
\end{enumerate}
\end{remark}

\begin{example} \label{examplefor}
Consider the data set 
$X = (2,5,7)$ with weights $w=(\frac{1}{3},\frac{1}{2},\frac{1}{6})$. With this input, the package \texttt{LogConcDEAD} returns the heights 
\[
y^* = (y_1,y_2,y_3)= (-1.454152,-1.605833, -1.888083), 
\]
suggesting that there are two regions of linearity (Figure \ref{fig43a}). Let $\Delta=\{\{1,2\},\{2,3\}\}$. We consider critical equations for
$$S_{\Delta}(y_1,y_2,y_3)=\frac{\it y_1}{3}+\frac{\it y_2}{2}+\frac{\it y_3}{6}-3\,{\frac {{{\rm e}^{{\it y_1}}}-{
{\rm e}^{{\it y_2}}}}{{\it y_1}-{\it y_2}}}-2\,{\frac {{{\rm e}^{{\it y_2}
}}-{{\rm e}^{{\it y_3}}}}{{\it y_2}-{\it y_3}}}
$$
which lead to the polynomial-exponential system $\nabla S_{\Delta}: \mathbb{C}^3 \to \mathbb{C}^3$ given by
\begin{align*}
    (y_1-y_2)^2 \frac{\partial S(y_1,y_2,y_3)}{\partial y_1}&=0\\
    (y_1-y_2)^2 (y_2-y_3)^2 \frac{\partial S(y_1,y_2,y_3)}{\partial y_2}&=0\\
    (y_2-y_3)^2 \frac{\partial S(y_1,y_2,y_3)}{\partial y_3}&=0,
\end{align*}
where we have cleared denominators.
The numerical solution from \texttt{LogConcDEAD} is not immediately $\alpha$-certified, but after applying Algorithm \ref{algorithm:refinement} we obtain the $\alpha$-certified solution:
$y^* + \varepsilon = (y_1,y_2,y_3)=(-1.45415181, -1.60583278, -1.88808307).$
\end{example}

\begin{example} \label{exampleagainst}
We now consider the same sample $X = (2,5,7 )$ with uniform weights. As discussed in Example~\ref{ex:2-5-7-critical-equations}, \texttt{LogConcDEAD} output suggests that the logarithm of the optimal density has a single region of linearity (Figure \ref{fig43b}). Can we certify this assessment? Recall that substituting $y_2 = \frac{2}{5}y_1 + \frac{3}{5} y_3$ to $S(y_1,y_2,y_3) = \frac{1}{3} y_1 + \frac{1}{3} y_2 + \frac{1}{3} y_3 - 3 \frac{e^{y_1} - e^{y_2}}{y_1-y_2} -2 \frac{e^{y_2} - e^{y_3}}{y_2-y_3}$ gives
\begin{equation*}
    \widetilde{S} = \frac{7}{15} y_1 + \frac{8}{15} y_3 - 5 \frac{e^{y_1} - e^{y_3}}{y_1-y_3}.
\end{equation*}
The system of equations $\nabla \widetilde{S} = 0$ does have solutions, and we were able to check that the numerical solution $y^*$ computed by \texttt{LogConcDEAD} is an $\alpha$-certified solution to this amended system of equations. 
\end{example}

\begin{figure*}
  \begin{subfigure}[b]{0.3\textwidth}
  \includegraphics[trim={1cm 1cm 0 0},clip,width=1\textwidth,height=0.8\textwidth]{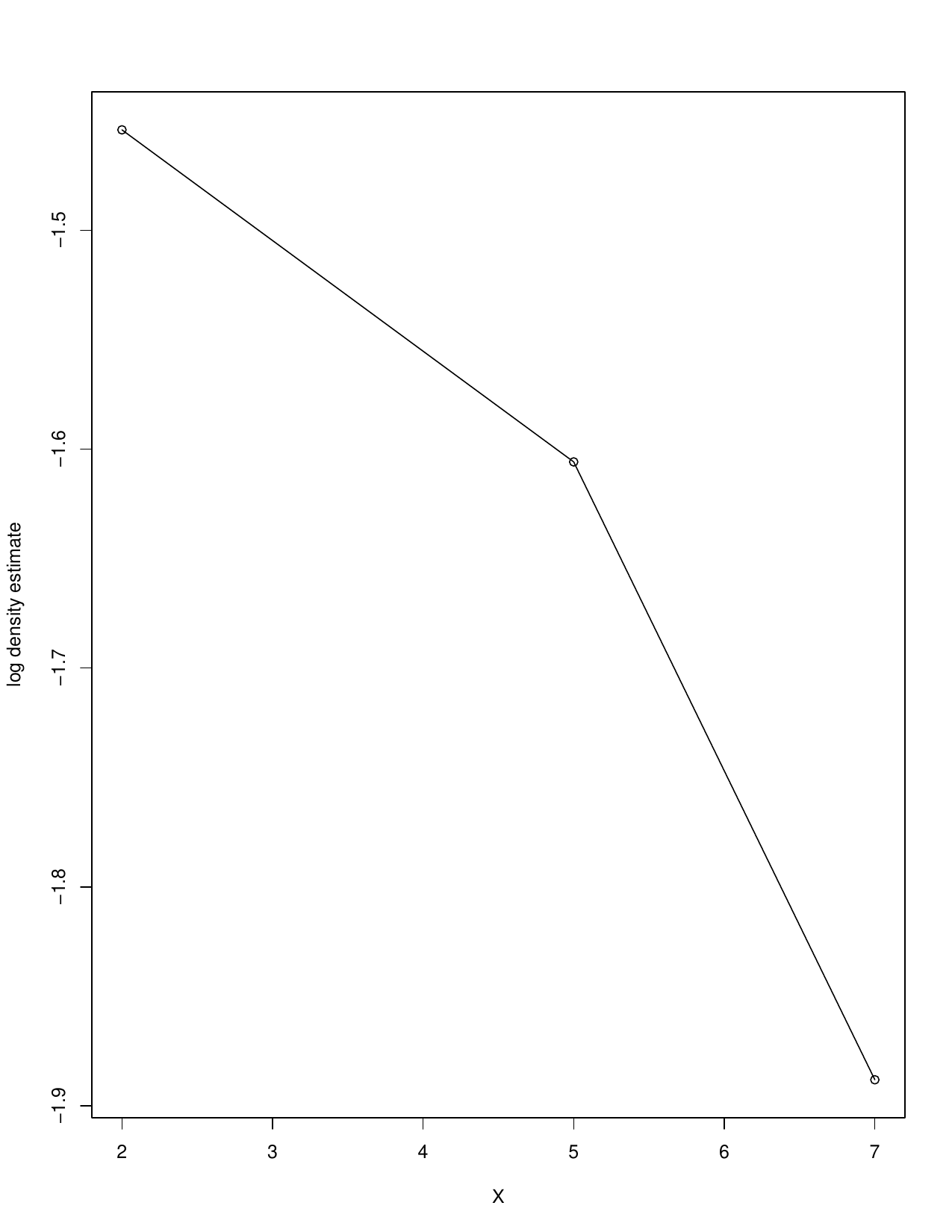}
    \caption{}
    \label{fig43a}
  \end{subfigure}
  \hfill
  \begin{subfigure}[b]{0.3\textwidth}
    \includegraphics[trim={1cm 1cm 0 0},clip,width=1\textwidth,height=0.8\textwidth]{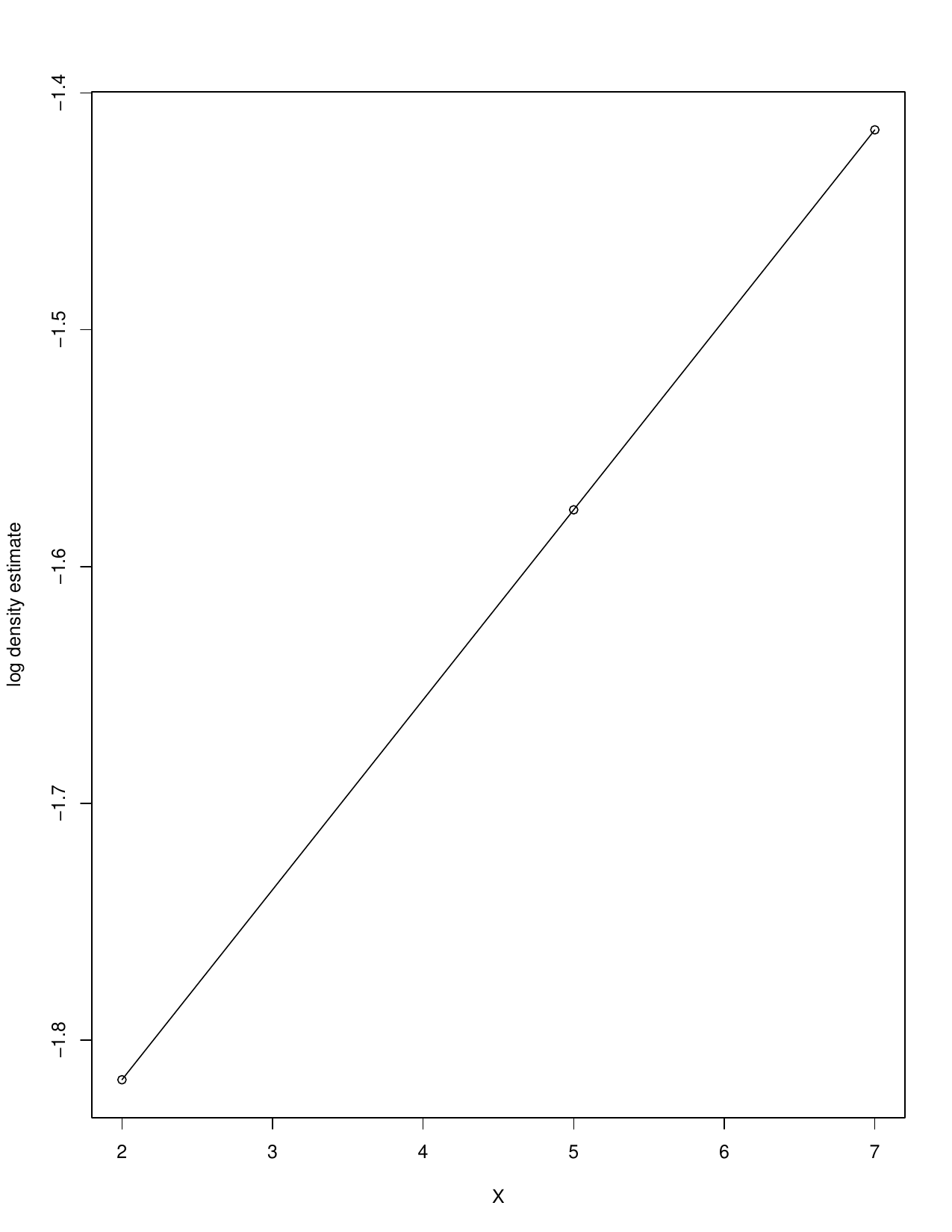}
    \caption{}
    \label{fig43c}
  \end{subfigure}     
  \hfill
  \begin{subfigure}[b]{0.3\textwidth}
    \includegraphics[width=1\textwidth,height=0.8\textwidth]{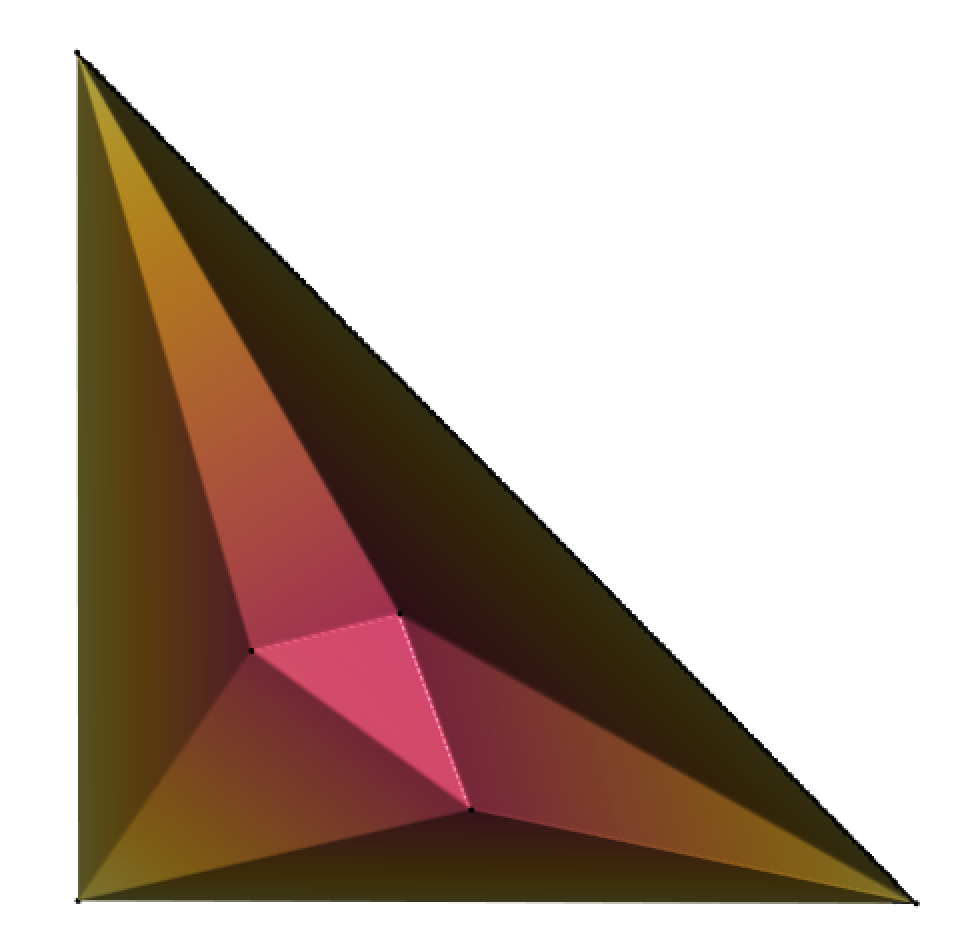}
    \caption{}
    \label{fig43b}
  \end{subfigure}
\caption{\label{heightgraphs}The height functions for (a) Example \ref{examplefor}; (b) Example \ref{exampleagainst}; (c) Example \ref{example2dim}}
\end{figure*}

\begin{example}
We used Algorithm \ref{algorithm:refinement} to certify the sample $X = (0,1,2, \dots, n ) \subset \mathbb{R}$ for weights given by the binomial distribution with $p=6/11$, i.e.,\ $w_i = \binom{n}{i} (6/11)^i(5/11)^{n-i}$. 
Looking at the \texttt{LogConcDEAD} output, we suspect that the triangulation given by the points consists of all consecutive line segments $\{i-1, i\}$ for $i \in 1, 2, \dots, n$. 
We therefore compute $\alpha$-values using the system of equations corresponding to the full triangulation. In all cases tested, we were able to certify the system for some refinement of the original \texttt{LogConcDEAD} output. In Table \ref{tab:bindig}, we summarize the number of binary digits required for certification in each case. This table suggests that the complexity of $\alpha$-certifying increases when the number of sample points increases.

\begin{table}[ht]
\centering
\begin{tabular}[t]{|c|c|c|c|c|c|}
\hline
n &3 & 4 & 5 & 6 & 7 \\
\hline
\text{binary digits} & 22 & 23 & 27 & 31 & 31\\
\hline
\end{tabular}
\caption{Number of binary digits needed to certify $n+1$ points with weights coming from an asymmetric binomial distribution.}
\label{tab:bindig}
\end{table}
\end{example}

We now present an example in two dimensions that needs more significant digits than the previous cases.

\begin{example} \label{example2dim}
We consider the point configuration from~\cite[Example 1.1]{robeva2019geometry}, 
given by 
\[
X=((0,0), (0,100), (22,37), (36,41), (43,22), (100,0)) \subset \mathbb{R}^2
\]
and uniform weights. The package \texttt{LogConcDEAD} returns the heights 
\[
(y_1,y_2,y_3,y_4,y_5,y_6)= (-8.789569,-8.772087,-8.253580,-8.217959,-8.236983,-8.756922) 
\]
as the optimal solution. This gives rise to a triangulation of the convex hull of the data points with regions of linearity consisting of the triangles
\begin{equation*}
    \{1,2,3\}, \{1,3,5\}, \{1,5,6\}, \{2,3,4\}, \{2,4,6\}, \{3,4,5\},\{4,5,6\},
\end{equation*}
in Figure~\ref{fig43c}. This data gives an $\alpha$-value of $10^{26}$, which is much larger than the required $0.157671$. However, the system of equations it came from has a relatively high degree and the polynomial equations, when expanded, have between $929$ and $1564$ terms. We try to decrease the $\alpha$-value using the uniform sampling algorithm described above. We create a list of $ 729=3^6$ points in $\mathbb{R}^6$, consisting of all points whose i-th coordinate is
\[
y_i + \epsilon_i 2^{-14}, \epsilon_i \in \{-1,0,1\}.
\]
After a few repetitions, this finds a point with a lower alpha value. We repeat this process, each time decreasing the exponent of $2$ when creating the new test points. After 95 rounds we detect the refined point 
   $$
   \centering
   \begin{bmatrix}
        -8.789570552675578322471018111262921 \\
        -8.772086862481395608253513836856700 \\
        -8.253580886913590521217040193671505 \\
        -8.217957742357924329528595494315867 \\
        -8.236983233544571734253428918807660 \\
        -8.756919956247208359690046164738877
    \end{bmatrix}   $$
with alpha value $0.125519$. Therefore, this new solution is $\alpha$-certified. Note that this number has 34 decimal digits; we have rounded digits coming from the conversion from base $2$ ($109$ digits) after this position. Our conclusion is that the triangulation obtained by the heights in the \texttt{LogConcDEAD} output is certifiably correct.
\end{example}

\begin{example}
\label{finalexample}
We finish our paper by returning to our motivating example~\ref{motivatingexample} from the introduction, and consider two possible subdivisions of $P=\conv(X)$ for the regions of linearity of the optimal tent function: 
$$
        \Delta_1=\{\{1, 2, 3\},\{1, 3, 4\},\{2, 3, 4\},\{2,4,12\},\{1,4,8,11\},\{4,11,12\},\{8,11,12,13,14\}\}
$$
    and
$$
        \Delta_2=\{\{1, 2, 3\},\{1, 3, 4\},\{2,3,4,12\},\{1,4,8,12,13,14\}\}.
$$
The first subdivision $\Delta_1$ in Figure~\ref{fig:7-cell} arises from the \texttt{LogConcDEAD} output with default parameters after using the ``unique'' function. 
The second subdivision $\Delta_2$ in Figure~\ref{fig:4-cell} is given by the four regions of linearity  in Figure \ref{figure: 14 point} that we get by adjusting the precision in \texttt{LogConcDEAD} and then using the ``unique'' function.
Unfortunately the objective functions involved have too many summands for $\alpha$-certification to be feasible. 

\begin{figure*}
  \begin{subfigure}[b]{0.3\textwidth}
    \includegraphics[width=1\textwidth]{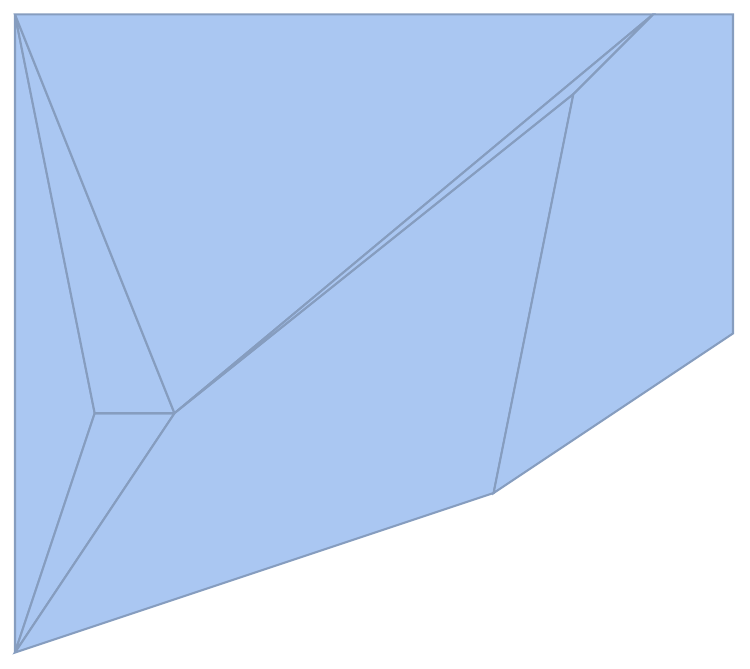}
    \caption{}
    \label{fig:7-cell}
  \end{subfigure}
  \hfill
  \begin{subfigure}[b]{0.3\textwidth}
    \includegraphics[width=1\textwidth]{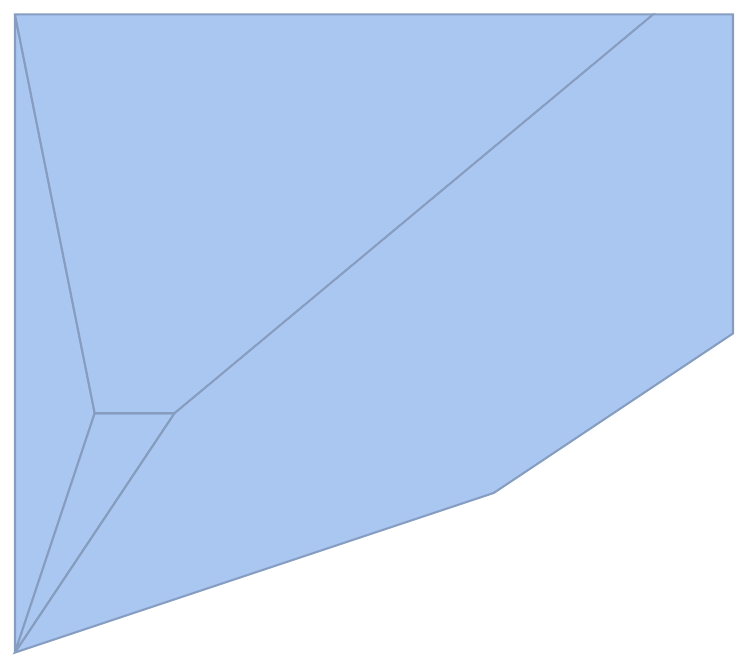}
    \caption{}
    \label{fig:4-cell}
  \end{subfigure}
  \hfill
  \begin{subfigure}[b]{0.3\textwidth}
    \includegraphics[width=1\textwidth]{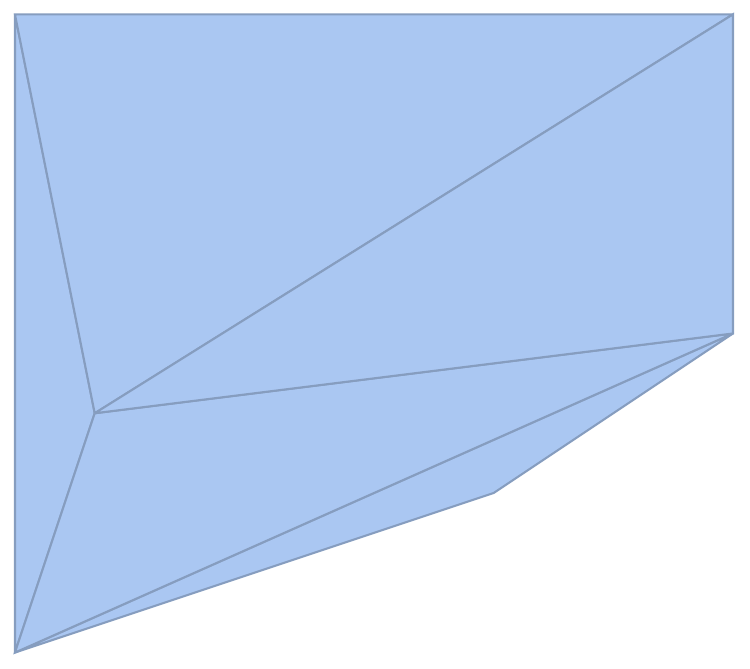}
    \caption{}
    \label{fig:7-cell-optimal}
  \end{subfigure}
  \caption{\label{heightgraphsfinal}Subdivisions in Example~\ref{finalexample}. (a) Subdivision $\Delta_1$ (b) Subdivision $\Delta_2$ (c) The subdivision induced by $y_{\Delta_1}^*$.}
\end{figure*}
    
As an alternative, we use the \texttt{NMaximize} command in \texttt{Mathematica} directly on the objective functions $S_{\Delta_1}$ and $S_{\Delta_2}$. The optimal $y_{\Delta_1}^*$ for the 7-cell subdivision gives a tent function whose regions of linearity are
$$
\{\{1,2,3\},\{1,8,13\},\{1,3,13\},\{2,3,14\},\{3,13,14\}\},
$$
which are depicted in Figure~\ref{fig:7-cell-optimal}. This triangulation is not refined by the subdivision $\Delta_1$: For example, the triangle $\{1,3,4\}$ in the subdivision $\Delta_1$ intersects the interiors of triangles $\{1,3,13\},\{2,3,14\},\{3,13,14\}$. Thus the 7-cell subdivision $\Delta_1$ is not the subdivision that we are looking for. In fact, the  vector $y_{\Delta_1}^*$ is not relevant, i.e. there exists $x_i$ such that $h_{X,y_{\Delta_1}^*}(x_i)>y_i$, and as a result $\int_P\exp(h_{X,y_{\Delta_1}^*}(t)) \neq 1$. 

The command \texttt{NMaximize} gives for the 4-cell subdivision 
\begin{align*}
y_{\Delta_2}^*=(&-4.32285, -4.7141, -4.2737, -4.14495, -4.26961, -4.10156, -3.94188, \\
&-3.91671, -3.94162, -3.80042, -3.76397, -3.68413, -3.69541, -3.62252).
\end{align*}
In comparison, the optimal height vector that we obtain using \texttt{LogConcDEAD} is
\begin{align*}
y^*=(&-4.322797, -4.714126, -4.273678, -4.144934, -4.269616, -4.101524, -3.941869,\\
&-3.916668, -3.941666, -3.800423, -3.764006, -3.684179, -3.695395, -3.622560).
 \end{align*}
A computation in \texttt{Polymake} verifies that $y_{\Delta_2}^*$ gives a tent function whose regions of linearity are exactly the cells of $\Delta_2$. This suggests that the 4-cell subdivision $\Delta_2$ is indeed the subdivision induced by the optimal $y^*$ in Example~\ref{motivatingexample}. We conclude with a haiku.
    \begin{center}
        Approximate heights,\\
        subdivisions inexact.\\
        A long road ahead.
    \end{center}

\end{example}
\vspace{0.5cm}

\noindent \textbf{Acknowledgements.} This project started at the Summer School on Geometric and Algebraic Combinatorics at Sorbonne University in June 2019. 
We thank Bernd Sturmfels for the guidance with the project, Gleb Pogudin for suggesting to use Lambert functions, Ricky Liu and Cynthia Vinzant for useful discussions and comments. 
Grosdos was partially supported by the DFG grant GK 1916, Kombinatorische Strukturen in der Geometrie. 
Kubjas and Kuznetsova were partially supported by the Academy of Finland Grant 323416. 
Scholten was partially supported by NSF Grant DMS 1620014.

\bibliographystyle{plain}
\bibliography{references}

\smallskip

\smallskip

\noindent Authors' affiliations:

\vspace{0.2cm}
\noindent Alexandros Grosdos, Institute for Mathematics, Osnabr\"{u}ck University,\\  \texttt{alexandros.grosdos@uni-a.de}

\vspace{0.2cm}
\noindent Alexander Heaton, Max Planck Institute for Mathematics in the Sciences, Leipzig, and Technische Universit\"at Berlin,\\ \texttt{alexheaton2@gmail.com}

\vspace{0.2cm}
\noindent Kaie Kubjas, Department of Mathematics and Systems Analysis, Aalto University,\\ \texttt{kaie.kubjas@aalto.fi}

\vspace{0.2cm}
\noindent Olga Kuznetsova,  Department of Mathematics and Systems Analysis, Aalto University, \texttt{olga.kuznetsova@aalto.fi}

\vspace{0.2cm}
\noindent Georgy Scholten, Department of Mathematics, North Carolina State University,\\ \texttt{georgy.scholten@lip6.fr}

\vspace{0.2cm}
\noindent Miruna-Stefana Sorea, Max Planck Institute for Mathematics in the Sciences, Leipzig,\\ 
\texttt{mirunastefana.sorea@ulbsibiu.ro}
\end{document}